\documentclass{amsart}
\usepackage{hyperref}
\hypersetup{colorlinks = true, linkcolor = black, citecolor = black}

\DeclareMathOperator{\vol}{vol}
\DeclareMathOperator{\cs}{cs}
\DeclareMathOperator{\sn}{sn}
\DeclareMathOperator{\tr}{tr}

\newcommand{\m}{{\scriptscriptstyle{-}}}
\newcommand{\p}{{\scriptscriptstyle{+}}}
\newcommand{\norm}[1]{\lVert{#1}\rVert}

\let\phi\varphi

\newtheorem{theorem}{Theorem}[section]

\newtheorem{lemma}[theorem]{Lemma}
\newtheorem{corollary}[theorem]{Corollary}

\theoremstyle{definition}

\theoremstyle{remark}
\newtheorem{remark}[theorem]{Remark}

\numberwithin{equation}{section}

\begin{document}

\title[Volume estimates for tubes using integral curvature bounds]{Volume estimates for tubes around submanifolds using integral curvature bounds}

\author{Yousef K. Chahine}
\address{University of California, Santa Barbara}
\curraddr{}
\email{ychahine@math.ucsb.edu}
\thanks{}

\subjclass[2010]{Primary 53C20}

\date{}

\dedicatory{}

\begin{abstract}
We generalize an inequality of E. Heintze and H. Karcher \cite{HK78} for the volume of tubes around minimal submanifolds to an inequality based on integral bounds for $k$-Ricci curvature.  Even in the case of a pointwise bound, this generalizes the classical inequality by replacing a sectional curvature bound with a $k$-Ricci bound.  This work is motivated by the estimates of Petersen-Shteingold-Wei for the volume of tubes around a geodesic \cite{PSW97} and generalizes their result.  Using similar ideas we also prove a Hessian comparison theorem for $k$-Ricci curvature which generalizes the usual Hessian and Laplacian comparison for distance functions from a point and give several applications.
\end{abstract}

\maketitle

\section{Introduction}\label{sec:intro}

The geodesic tube of radius $r$ around a closed submanifold $\Sigma^m$ of a Riemannian manifold $M^n$, denoted $T(\Sigma,r)$, is the set of all points whose distance to $\Sigma$ is at most $r$.
In this paper, we give upper bounds for the volume of $T(\Sigma,r)$ based on $L^p$ norms of the negative part of the $k$-Ricci curvature of $M$.  For $p=\infty$, we prove that the well-known estimate of E. Heintze and H. Karcher based on pointwise sectional curvature bounds requires only $k$-Ricci bounds (Theorem \ref{thm:pointwiseestimate}).  The main result is the case $p<\infty$, where we give the first estimates for the volume of tubes around submanifolds of general codimension using integral curvature bounds (Theorem \ref{thm:integralestimate}).

The $k$-Ricci curvature interpolates between sectional curvature and Ricci curvature by taking an average of sectional curvatures over a $k$-dimensional subspace of the tangent space.  Specifically, given a unit vector $u$ tangent to $M$ and $k$-dimensional subspace $\mathcal V$ of the tangent space orthogonal to $u$ the $k$-Ricci curvature of $(u,\mathcal V)$ is defined by
$$Ric_k(u,\mathcal V)=\sum_{i=1}^k \langle R(e_i,u)u,e_i\rangle$$
where $e_1,...,e_k$ form an orthonormal basis of $\mathcal V$.  Notice that $Ric_{n-1}$ is equivalent to the Ricci curvature and $Ric_1$ is equivalent to sectional curvature.  We say that a manifold has $k$-Ricci curvature bounded below by $kH$ for some constant $H$ if $Ric_k(u,\mathcal V)\geq kH$ for all unit vectors $u\in TM$ and $k$-dimensional subspaces $\mathcal V\perp u$.

The earliest global results using $k$-Ricci lower bounds as a partial positivity condition for curvature were obtained by Wu \cite{W87}, Shen \cite{S93}, and Shen-Wei \cite{SW93}, though the relationship between $k$-Ricci curvature and volume had been considered previously by Bishop and Crittenden \cite[p. 253]{BC64}.
A signficant literature has since developed which bridges a gap between the global results based on sectional curvature bounds and those based on Ricci curvature bounds \cite{W97, PW03, XY12, GW18, KM18}. 

In a different direction, it has recently been shown that some global results still hold even without pointwise bounds, provided the part of the curvature which violates a pointwise bound is small in an $L^p$ sense \cite{G88,Y92,PW97,PSW97,PS98,A07}.
To make this precise, for a real-valued function $f$ let $f_\p=\max\{f,0\}$ and $f_\m=\max\{-f,0\}$ denote the positive and negative parts of $f$, respectively.  Given a manifold $(M,g)$, let $\rho_k(x)$ denote the minimum of $Ric_k(u,\mathcal V)$ where $u\in T_xM$ is a unit tangent vector at $x$ and $\mathcal V$ is a $k$-dimensional subspace orthogonal to $u$.  For a fixed constant $H$ we may then consider the norms
$$\norm{(\rho_k-H)_\m}_p= \left(\int_M (\rho_k-H)_\m^p \;dvol_g \right)^{1/p}$$
which measure the amount of $k$-Ricci curvature below $H$.

Volume estimates for geodesic balls and tubes around hypersurfaces using bounds on $\norm{(\rho_{n-1}-H)_-}_p$ have been obtained by Gallot \cite{G88}, Yang \cite{Y92}, and Petersen-Wei \cite{PW97}.  The latter established a relative volume comparison using integral curvature bounds and has numerous applications.  Volume estimates for tubes around geodesics using integral curvature bounds were also obtained by Petersen-Shteingold-Wei \cite{PSW97} and used to generalize Cheeger's lemma and the Grove-Petersen finiteness theorem to manifolds with integral curvature bounds.

This last work illustrates the increase in difficulty in the case that $\Sigma$ has arbitrary codimension.  In particular, estimates were needed for certain quadratic invariants of the Hessian of the distance function which were completely new to comparison geometry \cite[Lemma 3.1]{PSW97}.
Our methods are based on the ideas of \cite{PSW97}; however, in that work a number of simplifications were employed specific to the 1-dimensional case which make modification to general codimension nontrivial.  Indeed, if one naively adapts the arguments of \cite{PSW97} the resulting volume estimates require stronger assumptions on the curvature of $M$ and on the second fundamental form of $\Sigma$ than are necessary.

In Section \ref{sec:integralestimate}, we show how these types of estimates generalize to tubes around minimal submanifolds of all dimensions.  Specifically, we prove
\begin{theorem}\label{thm:integralestimate}
Let $M^n$ be a complete Riemannian manifold and let $\Sigma^m\subset M$ be an $m$-dimensional closed minimal submanifold with $0<m<n-1$ and put $k=\min\{m, n-m-1\}$.  If $H\leq 0$ and $p>n-k$ then
$$\vol(T(\Sigma,r))\leq \left(w(r)^{n-m-1} + 2^{p/\alpha}\norm{(\rho_k-H)_-}_p^{\beta p} w(r)^p\right)e^{\kappa r^{2\alpha}}$$
where $\alpha = \frac{n-k-1}{n-k},$ $\beta = \frac{1}{n-m-1}-\frac{1}{p}$, 
\begin{equation*}
w(r) = \left( \tfrac{\alpha}{n-m-1}\right)^\frac{1}{n-k-1}\left(\vol(\mathbb S^{n-m-1})\vol(\Sigma) r^{n-m}\right)^\frac{1}{n-m-1} + \delta \norm{(\rho_k-H)_\m}_p^{1-\beta} r^2,
\end{equation*}
and $\kappa = (\delta|H|)^\alpha/(2\alpha)$ with
$$\delta = 4(n-k-1) + \frac{4}{k}\left(\frac{2p-1}{p-n+k}\right).$$
\end{theorem}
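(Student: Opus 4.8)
The plan is to run the Heintze--Karcher tube computation through the normal exponential map and its Jacobian, but to replace the pointwise Riccati (Rauch) comparison by an integral one in the spirit of the work of Petersen--Wei and Petersen--Shteingold--Wei, and to feed only a $\mathrm{Ric}_k$ lower bound into that comparison by averaging curvature over subspaces.

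First I would fix Fermi coordinates: let $\exp^\perp\colon\nu(\Sigma)\to M$ be the normal exponential map, $\theta(q,t,u)$ its Jacobian (with $q\in\Sigma$, $u\in\nu_q\Sigma$ a unit vector, $t$ the radial parameter), so that, extending $\theta$ by zero past the cut locus of $\Sigma$,
\[
\vol\bigl(T(\Sigma,r)\bigr)\le\int_\Sigma\int_{\mathbb S^{n-m-1}}\int_0^r\theta(q,t,u)\,dt\,du\,d\vol_\Sigma(q),\qquad\theta(q,t,u)\sim t^{\,n-m-1}\text{ as }t\to0 .
\]
Along a unit-speed normal geodesic $\gamma=\gamma_{q,u}$ one has $(\log\theta)'=\Delta r=\tr S$, where $S(t)=\Hess r$ is the self-adjoint solution of $S'+S^2+R=0$ on $(\gamma')^\perp$ with $R=R(\cdot,\gamma')\gamma'$. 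I would split $(\gamma')^\perp$ into the parallel transports $\mathcal T$ of $T_q\Sigma$ ($\dim m$) and $\mathcal N$ of $(\mathbb R u)^\perp\cap\nu_q\Sigma$ ($\dim n-m-1$) and put $a=\tr(S|_{\mathcal T})$, $b=\tr(S|_{\mathcal N})$, so $\tr S=a+b$; minimality of $\Sigma$ makes $a(0^{+})=-\langle\vec H_\Sigma,u\rangle=0$ while $b(t)\sim(n-m-1)/t$. Taking partial traces of the Riccati equation, using $\sum_{e_i\in V}\norm{Se_i}^{2}\ge(\dim V)^{-1}(\tr S|_V)^{2}$ together with $\mathrm{Ric}_j(\gamma',V)\ge\tfrac jk\,\rho_k(\gamma)$ for every $j$-plane $V$ with $j\ge k$ (obtained by averaging the definition of $\rho_k$ over the $k$-element subsets of an orthonormal basis of $V$; this is precisely why $k=\min\{m,n-m-1\}$ is the best dimension to use), and then $\rho_k\ge H-(\rho_k-H)_\m$, I get
\begin{equation*}
a'+\tfrac1m a^{2}\le-\tfrac mk H+\tfrac mk\varrho,\qquad b'+\tfrac1{n-m-1}b^{2}\le-\tfrac{n-m-1}{k}H+\tfrac{n-m-1}{k}\varrho ,
\end{equation*}
where $\varrho(t):=(\rho_k-H)_\m(\gamma(t))\ge0$.

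Next I would convert each Riccati inequality to a comparison with the constant-curvature model carrying an integral error. Writing $b=(n-m-1)\varphi'/\varphi$ with $\varphi(0)=0$, $\varphi'(0)=1$, and $a=m\chi'/\chi$ with $\chi(0)=1$, $\chi'(0)=0$, turns the above into $\varphi''+\tfrac Hk\varphi\le\tfrac1k\varrho\varphi$ and $\chi''+\tfrac Hk\chi\le\tfrac1k\varrho\chi$, and matching the $t\to0$ normalizations gives $\theta=\varphi^{\,n-m-1}\chi^{\,m}$ exactly. Comparing $\varphi=\sn_{H/k}\,\mu$, $\chi=\cs_{H/k}\,\nu$ with the homogeneous solutions (so $(\sn_{H/k}^{2}\mu')'\le\tfrac1k\varrho\,\sn_{H/k}^{2}\mu$, similarly for $\nu$, with $\mu,\nu\ge1$ nondecreasing) and applying Gr\"onwall---using $H\le0$ so that the model functions and the relevant weight ratios are monotone---I obtain the pointwise bound
\[
\theta(q,t,u)\le\sn_{H/k}(t)^{\,n-m-1}\cs_{H/k}(t)^{\,m}\exp\!\Bigl(C\!\int_0^t(t-s)\,\varrho(q,s,u)\,ds\Bigr),\qquad C=C(n,m,k).
\]

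The heart of the argument, and the step I expect to be the real obstacle, is the quantitative assembly. After bounding the model factor and integrating in $t$, one must integrate over $\Sigma\times\mathbb S^{n-m-1}$ and feed the curvature error---an iterated integral of $\varrho$ along geodesics---into the \emph{global} norm $\norm{(\rho_k-H)_\m}_p$. The bridge is the coarea inequality
\[
\int_\Sigma\int_{\mathbb S^{n-m-1}}\int_0^r\varrho(q,s,u)^{p}\,\theta(q,s,u)\,ds\,du\,d\vol_\Sigma(q)\le\norm{(\rho_k-H)_\m}_p^{\,p},
\]
which reintroduces the Jacobian weight $\theta\sim s^{\,n-m-1}$ stripped off in the pointwise bound. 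Redistributing this weight through H\"older's inequality leaves a factor $\int_0^r s^{-\gamma}\,ds$ that converges exactly when $p>n-k$---this is where the hypothesis on $p$ enters, and where the blow-up of $\delta$ as $p\to(n-k)^{+}$ originates---while Young's inequality is what produces the exponential $e^{\kappa r^{2\alpha}}$ absorbing the hyperbolic growth of the $H<0$ model. Carrying out this layered H\"older/Young bookkeeping yields the function $w(r)$ and the $w^{\,n-m-1}+(\text{curvature norm})\cdot w^{\,p}$ shape of the estimate. I would isolate this computation as a self-contained lemma on functions satisfying the differential inequalities of the previous step, so that the exponents $\alpha,\beta$ and the constant $\delta$ come out of a single optimization rather than being tracked through the whole proof.
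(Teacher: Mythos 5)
Your setup through the decomposition $\theta=\varphi^{n-m-1}\chi^{m}$ is exactly right and matches the paper's $\mathcal A=J^{m}Y^{n-m-1}$, and the partial Riccati traces combined with the reduction to $k$-Ricci are also the paper's strategy. Where the proposal goes wrong is the step you flag yourself as the obstacle: passing to a pointwise Gr\"onwall bound $\theta\le(\text{model})\exp\bigl(C\int_0^t(t-s)\varrho\,ds\bigr)$. The exponent is a \emph{line integral of $\varrho$ along a single normal geodesic, carrying no volume density}. Such a quantity cannot be dominated by $\norm{(\rho_k-H)_\m}_p$: concentrate $\varrho$ on a cone of unit normals of measure $\epsilon$ at fixed height; $\int_M(\rho_k-H)_\m^p\,d\vol\to 0$ as $\epsilon\to 0$, while $\int_0^t(t-s)\varrho\,ds$ along the axial geodesic stays bounded below. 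Any H\"older step to "redistribute" $\theta$ into that exponent produces an estimate geodesic-by-geodesic, and the worst geodesic is uncontrolled. This is precisely why the integral-curvature proofs of Petersen--Wei and Petersen--Shteingold--Wei, and Section~5 here, never exponentiate the curvature error; they instead estimate \emph{differences} such as $J'Y^{(n-m-1)/m}$ so that the error always appears multiplied by a power of $\mathcal A$, making it integrable against $d\vol_M$.

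The second, related gap is that your plan omits the one genuinely new technical ingredient of the theorem: the mixed mean-curvature estimate (Lemma~\ref{lem:2meanlemma}), which bounds $\int_0^r(\phi_\p\psi_\p)^p\mathcal A\,dt$ by $\bigl(\frac{2p-1}{p-(n-k)}\bigr)^p\int_0^r(\rho_k)_\m^p\mathcal A\,dt$. The cross term $\phi_\p\psi_\p$ is unavoidable once you work with the product decomposition $\mathcal A=J^mY^{n-m-1}$ and differentiate, and controlling it by curvature---rather than by Cauchy--Schwarz, which would force stronger hypotheses---is the point of the lemma and the actual origin of the hypothesis $p>n-k$. Your sketch produces a convergence condition from the singular weight $\theta\sim s^{n-m-1}$ that works out to $p>n-m$, which is not the right threshold when $m>n-m-1$; the correct $p>n-k$ enters via the integration by parts in Lemma~\ref{lem:2meanlemma}, which uses $h\le(n-k-1)(\sigma+\tau)$ and the decay $\mathcal A(0)=0$. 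So the plan needs to be restructured along the lines of Lemmas~\ref{lem:lemma1} and~\ref{lem:2meanlemma}: derive a differential inequality for $V(t)=\vol(T(\Sigma,t))$ directly, keeping the full density in every curvature term, rather than attempting a pointwise Jacobian bound.
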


\begin{remark}
As mentioned above, estimates when $\Sigma$ is a point or a hypersurface have already been obtained in \cite{G88,PW97} so we do not repeat this case.
\end{remark}

\begin{remark}
In the case of a pointwise lower bound $Ric_m\geq 0$ (i.e. $\norm{(\rho_m)_\m}_p=0$) with $m\leq n-m-1$ the estimate above reduces to 
$$\vol(T(\Sigma,r))\leq \frac{1}{n-m}\vol(\Sigma)\vol(\mathbb S^{n-m-1}) r^{n-m}.$$
In particular, this shows that the Heintze-Karcher estimate \cite[Corollary 3.3.1]{HK78} holds for tubes around minimal submanifolds assuming only a $k$-Ricci lower bound in place of a sectional curvature lower bound.  In fact, in Theorem \ref{thm:pointwiseestimate} below we show that the Heintze-Karcher estimate holds for tubes around \emph{any} closed submanifold assuming only a pointwise $k$-Ricci lower bound (see also \cite{GM03} for a related volume comparison using pointwise $k$-Ricci bounds).
\end{remark}

Loosely speaking, the estimate of Theorem \ref{thm:integralestimate} shows that it does not matter how the negative part of the curvature concentrates around the submanifold, a uniform estimate holds for all manifolds with $\norm{(\rho_k-H)_\m}_p$ bounded above by a constant as long as $p$ is chosen sufficiently large.  The estimates of Gallot and Petersen-Wei for tubes around hypersurfaces and geodesic balls require $p>n/2$, whereas the estimates of Petersen-Shteingold-Wei for the tubes around a geodesic require $p>n-1$.  Notice that our requirement $p>n-k$ is a natural generalization of both of these conditions as $n-k$ is bounded below by $n/2$.

As a simple consequence of Theorem \ref{thm:integralestimate} we obtain the following uniform lower bound for the volume of closed minimal submanifolds in spaces with integral curvature bounds.

\begin{corollary}\label{cor:cheegerlemma}
Given integers $n$ and $m$ with $n\geq 3$ and $0<m<n-1$, and real numbers $H\leq 0$, $v_0,D>0$ and $p>n-k$ where $k=\min\{m,n-m-1\}$, there exist constants $\epsilon(n,m,p,H,v_0,D)>0$ and $\delta(n,m,p,H,v_0,D)>0$ such that every closed $n$-dimensional Riemannian manifold $M$ satisfying
\begin{align*}
\vol(M)&\geq  v_0,\;\;\; \textup{diam}(M) \leq D ,\;\;\;  \norm{(\rho_k-H)_\m}_p \leq \epsilon
\end{align*}
has the property that all closed $m$-dimensional minimal submanifolds have volume bounded below by $\delta$.
\end{corollary}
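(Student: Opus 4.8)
The plan is to apply Theorem~\ref{thm:integralestimate} with $r=D$ to an arbitrary closed minimal submanifold $\Sigma^m\subset M$, using the elementary observation that a tube of radius $D$ around any nonempty subset exhausts $M$ once $\textup{diam}(M)\le D$. Indeed, for every $x\in M$ and $y\in\Sigma$ one has $d(x,\Sigma)\le d(x,y)\le\textup{diam}(M)\le D$, so $T(\Sigma,D)=M$ and hence $v_0\le\vol(M)=\vol(T(\Sigma,D))$. A closed manifold is complete, and the hypotheses $0<m<n-1$, $H\le0$, $p>n-k$ with $k=\min\{m,n-m-1\}$ are exactly those of Theorem~\ref{thm:integralestimate}; that theorem therefore gives
\[
v_0\ \le\ \bigl(w(D)^{\,n-m-1}+2^{p/\alpha}\norm{(\rho_k-H)_\m}_p^{\,\beta p}\,w(D)^p\bigr)\,e^{\kappa D^{2\alpha}},
\]
with $\alpha,\beta,\kappa$ and $w(D)$ as in the statement of that theorem.

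Next I would make the dependence on $\vol(\Sigma)$ transparent. Writing $s:=\norm{(\rho_k-H)_\m}_p$, the formula for $w$ reads $w(D)=A\,\vol(\Sigma)^{1/(n-m-1)}+B\,s^{\,1-\beta}$ where $A>0$ and $B>0$ depend only on $n,m,p,D$, while $\kappa$ depends only on $n,m,p,H$, so that $C_0:=e^{\kappa D^{2\alpha}}$ depends only on $n,m,p,H,D$. The two arithmetic facts I need are that $\beta=\tfrac1{n-m-1}-\tfrac1p$ lies in $(0,1)$ --- it is positive since $p>n-k\ge n-m>n-m-1$, and smaller than $1$ since $n-m-1\ge1$ --- and that $\beta p>0$. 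Together these show that the right-hand side of the displayed inequality is a continuous function $F\bigl(\vol(\Sigma),s\bigr)$ of the two nonnegative quantities $\vol(\Sigma)$ and $s$, nondecreasing in each, with $F(0,0)=0$ (because $w(D)=0$ when $\vol(\Sigma)=s=0$).

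The corollary now follows by a soft continuity argument, with $n,m,p,H,D$ fixed throughout. Since $F(0,\epsilon)\to F(0,0)=0$ as $\epsilon\downarrow0$, I may choose $\epsilon=\epsilon(n,m,p,H,v_0,D)>0$ with $F(0,\epsilon)<v_0$; and since $t\mapsto F(t,\epsilon)$ is continuous at $t=0$, I may then choose $\delta=\delta(n,m,p,H,v_0,D)>0$ with $F(\delta,\epsilon)<v_0$. Now let $M$ satisfy the hypotheses of the corollary and let $\Sigma$ be a closed $m$-dimensional minimal submanifold. Then $s=\norm{(\rho_k-H)_\m}_p\le\epsilon$, so monotonicity of $F$ and the bound from the first paragraph give $v_0\le F(\vol(\Sigma),s)\le F(\vol(\Sigma),\epsilon)$; since $F(\delta,\epsilon)<v_0\le F(\vol(\Sigma),\epsilon)$ and $F(\cdot,\epsilon)$ is nondecreasing, we conclude $\vol(\Sigma)\ge\delta$. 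I do not expect a genuine obstacle here: the corollary is essentially a formal consequence of Theorem~\ref{thm:integralestimate}, in the spirit of how Cheeger's lemma is extracted from a volume bound, and the only points requiring care are verifying $0<\beta<1$ (which makes $w(D)$ and $F$ monotone and makes them vanish at the origin) and bookkeeping of which constants depend on which parameters.
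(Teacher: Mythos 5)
Your proposal is correct and follows essentially the same route as the paper: apply Theorem~\ref{thm:integralestimate} at $r=D$, note $T(\Sigma,D)=M$ so $v_0\le F(\vol(\Sigma),\norm{(\rho_k-H)_\m}_p,D)$, and invoke that $F\to0$ as its first two arguments tend to $0$. The paper states this last step more tersely, while you helpfully spell out the monotonicity/continuity bookkeeping and verify $0<\beta<1$, but there is no substantive difference in method.
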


\begin{remark}
This should be thought of as a generalization of Cheeger's lemma.  For the case of 1-dimensional minimal submanifolds (closed geodesics) this result was obtained already in \cite[Theorem 1.2]{PSW97}.  The proof follows easily from the observation that our uniform upper bound for the tube around a minimal submanifold approaches 0 as $\vol(\Sigma),\norm{(\rho_k-H)_\m}_p \to 0$.
\end{remark}
Before proving Theorem \ref{thm:integralestimate} we introduce our main ideas by proving a new Hessian comparison for distance functions based on $k$-Ricci curvature bounds which is of independent interest.  Specifically, if $r(x)=d(x,\Sigma)$ is the distance to a closed submanifold $\Sigma$ we prove an upper bound for certain partial traces of the Hessian $\nabla^2r$ given pointwise lower bounds on $Ric_k$.  This Hessian comparison unifies and generalizes a number of distinct Hessian and Laplacian comparisons for the distance function to a point.  Recall that in a space of constant curvature $H$ the eigenvalues of the Hessian of the distance function to a point are given by $\cs_H(r)/\sn_H(r)$ where $\sn_H$ and $\cs_H$ are the generalized trigonometric functions defined in Section \ref{sec:hessiancomparison}.

\begin{theorem}[Hessian Comparison]\label{thm:hessiancomparison}
Let $\Sigma^m$ be an $m$-dimensional submanifold of a complete Riemannian manifold $M^n$ and let $r(x)=d(x,\Sigma)$ be the distance function to $\Sigma$.  

Let $\gamma:[0,t_0)\to M$ be any geodesic segment satisfying $r(\gamma(t))=t$.  If
$$Ric_k(\dot\gamma, \cdot )\geq k H$$
for some constant $H$ then for any orthonormal $k$-frame $\{e_1(t),...,e_k(t)\}\subset T_{\gamma(t)}M$ which is parallel along $\gamma$ we have
$$\sum_{i=1}^k \nabla^2 r(e_i,e_i) \leq 
\begin{cases}
k\frac{h_0\cs_H(r)-H\sn_H(r)}{\cs_H(r) + h_0\sn_H(r)} & \textup{ if } \{e_1(0),...,e_k(0)\}\subset T\Sigma \\
k\cs_H(r)/\sn_H(r) & \textup{ otherwise }
\end{cases}
$$
where $h_0=\frac{1}{k}\sum_{i=1}^k \langle S_{\dot\gamma(0)}(e_i),e_i\rangle $ and $S_{\dot\gamma(0)}$ is the Weingarten map of $\Sigma$ for the normal $\dot\gamma(0)$.
\end{theorem}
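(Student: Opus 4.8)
The plan is to reduce the statement to a scalar Riccati differential inequality for the partial trace $\phi(t)=\sum_{i=1}^{k}\nabla^{2}r(e_{i}(t),e_{i}(t))$ and then run an ODE comparison argument. First I would record that the hypothesis $r(\gamma(t))=t$ on $[0,t_{0})$ forces $r$ to be smooth along $\gamma|_{(0,t_{0})}$ with $\nabla r=\dot\gamma$ there (the geodesic meets no focal point of $\Sigma$ and no cut point of $r$ before $t_{0}$). Consequently the shape operator $S_{t}=\nabla^{2}r|_{\gamma(t)}$ is a well-defined symmetric endomorphism of $\dot\gamma(t)^{\perp}$ satisfying the matrix Riccati equation $\frac{D}{dt}S_{t}+S_{t}^{2}+\mathcal{R}_{t}=0$, where $\mathcal{R}_{t}(X)=R(X,\dot\gamma)\dot\gamma$ and $\frac{D}{dt}$ is covariant differentiation along $\gamma$. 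When $\{e_{i}(0)\}\subset T\Sigma$ the parallel frame stays orthogonal to $\dot\gamma$ because $T\Sigma\perp\dot\gamma(0)$; in general only the $\dot\gamma^{\perp}$-components of the $e_{i}$ enter $\nabla^{2}r$, so we may work with frames in $\dot\gamma^{\perp}$.

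The core of the argument is a short computation. Since the $e_{i}$ are parallel, $\phi'(t)=\sum_{i}\langle(\tfrac{D}{dt}S_{t})e_{i},e_{i}\rangle=-\sum_{i}|S_{t}e_{i}|^{2}-\sum_{i}\langle R(e_{i},\dot\gamma)\dot\gamma,e_{i}\rangle=-\sum_{i}|S_{t}e_{i}|^{2}-Ric_k(\dot\gamma,\mathcal{V})$, where $\mathcal{V}=\operatorname{span}\{e_{1},\dots,e_{k}\}$. The curvature sum is at least $kH$ by hypothesis, and Cauchy--Schwarz gives $\sum_{i}|S_{t}e_{i}|^{2}\ge\tfrac1k\big(\sum_{i}|\langle S_{t}e_{i},e_{i}\rangle|\big)^{2}\ge\tfrac1k\phi(t)^{2}$. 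Hence $u:=\phi/k$ satisfies the Riccati inequality $u'\le-u^{2}-H$ on $(0,t_{0})$. This step --- replacing individual eigenvalue bounds by a bound on a partial trace --- is the one genuinely new idea, and it is also what dictates that we trace over exactly $k$ directions. One checks directly that the functions in the statement are precisely the solutions $v$ of the equality $v'=-v^{2}-H$: the solution with $v(0)=h_{0}$ is $v=(h_{0}\cs_{H}-H\sn_{H})/(\cs_{H}+h_{0}\sn_{H})$, and the singular solution with $v(t)\sim 1/t$ as $t\to0^{+}$ is $v=\cs_{H}/\sn_{H}$.

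It remains to compare $u$ with the appropriate $v$. If $\{e_{i}(0)\}\subset T\Sigma$, I would first identify the boundary behavior at $t=0$: using the $\Sigma$-Jacobi fields $Y_{i}$ along $\gamma$ with $Y_{i}(0)=e_{i}(0)$ --- for which $\langle Y_{i}'(0),Y_{i}(0)\rangle=\langle S_{\dot\gamma(0)}e_{i}(0),e_{i}(0)\rangle$ in the theorem's convention --- together with $\nabla^{2}r(Y_{i},Y_{i})=\langle Y_{i}',Y_{i}\rangle$, one obtains $\limsup_{t\to0^{+}}u(t)\le h_{0}$; since $u$ and the solution $v$ with $v(0)=h_0$ both stay bounded near $0$, the comparison principle --- apply an integrating factor to $w=u-v$, which obeys $w'\le-(u+v)w$ --- yields $u\le v$ on $(0,t_{0})$, the first bound. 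In the remaining case I claim $u\le\cs_{H}/\sn_{H}$ irrespective of the behavior of $u$ at $0$: if $u(t_{*})>\cs_{H}(t_{*})/\sn_{H}(t_{*})$ for some $t_{*}\in(0,t_{0})$, then the exact Riccati solution $\tilde v$ through $(t_{*},u(t_{*}))$ lies strictly above $\cs_{H}/\sn_{H}$ (distinct Riccati solutions cannot cross), and a short computation with $\tilde v=y'/y$, $y=a\cs_{H}+b\sn_{H}$, using $\cs_{H}^{2}+H\sn_{H}^{2}\equiv1$, shows $\tilde v$ blows up to $+\infty$ at some $t_{1}\in(0,t_{*})$; running the comparison principle backward from $t_{*}$ gives $u\ge\tilde v$ on $(t_{1},t_{*}]$, forcing $u\to+\infty$ at $t_{1}>0$ and contradicting the finiteness of $u$ on $(0,t_{0})$. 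Multiplying the resulting inequalities by $k$ completes the proof. I expect the main obstacle to be the treatment of the singular endpoint $t=0$ in this last step: pinning down $\limsup_{t\to0^{+}}u(t)\le h_{0}$ with the correct sign convention for $S_{\dot\gamma(0)}$, and making both comparison arguments (finite versus singular initial data) rigorous.
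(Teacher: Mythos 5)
Your proposal follows essentially the same route as the paper's proof (which appears as Lemma~\ref{lem:hessiancomparison}): take the partial trace of the matrix Riccati equation over the parallel $k$-plane, apply Cauchy--Schwarz to obtain the scalar inequality $w'+w^2\leq -H$ for $w=\tfrac1k\sum_i\nabla^2 r(e_i,e_i)$, and then invoke Riccati comparison with the appropriate singular versus regular initial data at $t=0$. The only cosmetic differences are that you identify the boundary behavior at $t=0$ via $\Sigma$-Jacobi fields rather than via the Taylor expansion \eqref{eq:taylor} of $tS(t)$, and you spell out the singular-endpoint comparison (the backward Riccati/Wronskian argument) that the paper delegates to a citation.
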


\begin{remark}
Notice that taking $\Sigma$ to be a point, the usual Hessian and Laplacian comparisons follow from this theorem by taking $k=1$ and $k=n-1$, respectively.
When $\Sigma$ is a point, the result was proved by Shen \cite[Lemma 11]{S93} and Li-Wang \cite[Theorem 1.2]{LW05}.
\end{remark}

\begin{remark}
This result implies the mean curvature comparison of \cite{GM03} when $\Sigma$ is totally geodesic.
\end{remark}

In Section \ref{sec:hessiancomparison} we give a slightly more general version of this theorem which also treats the question of rigidity when equality holds.  This comparison theorem should be compared with that of Guijarro-Wilhelm which gives comparison along a family of $k$-dimensional subspaces determined by Jacobi fields rather than parallel subspaces \cite[Lemma 2.23]{GW18}.  That comparison is based on Wilking's transverse Jacobi equation; by contrast, Theorem \ref{thm:hessiancomparison} above is based on the comparison theory for a Riccati differential equation and thus yields an elementary proof of the volume comparison of Section \ref{sec:pointwiseestimate}.

In Section \ref{sec:pointwiseestimate} we also use this Hessian comparison to generalize another Heintze-Karcher type inequality of G. Qiu and C. Xia relating the volume of a compact manifold with boundary to the total inverse mean curvature of the boundary \cite[Theorem 1.3]{QX14}.  They originally proved this inequality assuming a lower bound on sectional curvature and ask whether the inequality holds assuming only a Ricci lower bound.  In Theorem \ref{thm:hkinequality} we show that an $(n-2)$-Ricci lower bound suffices.

The paper is structured as follows.  In Section \ref{sec:prelim} we start with a brief review of the basic notions underlying the geometry of volume comparison and fix notation.  In Sections \ref{sec:hessiancomparison} and \ref{sec:pointwiseestimate} we give a proof of Theorem \ref{thm:hessiancomparison} and apply it to obtain several Heintze-Karcher type volume inequalities for $k$-Ricci curvature mentioned above.  We conclude in Section 5 with the proof of Theorem \ref{thm:integralestimate}.

\subsection*{Acknowledgements}  I would like to express gratitude to my advisor, Guofang Wei, for many helpful discussions and invaluable feedback in the preparation of this paper.  I would also like to thank Frederick Wilhelm for several suggestions to help clarify the exposition.  The author was supported in part by the National Science Foundation under the grant DMS-1506393.

\section{Preliminaries}\label{sec:prelim}

\subsection{Polar volume density, mean curvature, and distance}\label{sec:setup1}  Fix, once and for all, a complete, connected Riemannian manifold $(M^n,g)$ of dimension $n$.  Given a closed $m$-dimensional (embedded) submanifold $\Sigma$ of $M$ let $\nu=\nu(\Sigma)$ denote the normal bundle of $\Sigma$, $\nu_x$ the fiber over $x\in \Sigma$, and let $\hat\nu$ denote the unit normal bundle.  The normal bundle has a canonical Riemannian metric such that the projection is a Riemannian submersion and each tangent space splits orthogonally into a vertical subspace tangent to the fiber and a horizontal subspace consisting of vectors tangent to curves in $\nu$ which are parallel along their base curve (with respect to the normal connection).

It is well-known that the exponential map on the normal bundle $\exp_\nu:\nu\to M$ restricts to a diffeomorphism from the open neighborhood 
\begin{equation*}
U=\{u\in \nu : d(\Sigma, \exp_\nu((1+\epsilon)u)) = |(1+\epsilon)u| \textup{ for some } \epsilon > 0\}
\end{equation*}
of the zero section onto a set $\exp_\nu(U)$ such that $M\setminus\exp_{\nu}(U)$ has measure zero (see e.g. \cite{MM03,G04}).  Define the \emph{polar volume density} function $\mathcal A:\nu\to \mathbb R$ as the density of the volume element of $M$ written in $\Sigma$-polar coordinates on $U$, i.e. for normal vectors $u\in U$, $\mathcal A(u)$ can be defined using the Jacobian determinant of the normal exponential map by
$$\mathcal A(u) = |d(\exp_\nu)_{u}|\cdot|u|^{n-m-1}$$
with $\mathcal A$ extended to all of $\nu$ by setting $\mathcal A\equiv 0$ on $\nu\setminus U$.  Here, we have anticipated the use of Fubini's theorem to integrate over the normal bundle as an iterated integral whereupon the volume of the tube $T(\Sigma,r)$ can be written 
\begin{equation}\label{eq:volumeintegral}
\vol(T(\Sigma,r))=\int_0^r\int_{\hat\nu} \mathcal A(t,\xi) d\xi dt.
\end{equation}
Above and henceforth we put $\mathcal A(t,\xi)=\mathcal A(t\xi)$ for $\xi\in\hat\nu$.  Note that since $\exp_\nu$ is an isometry on the zero section of the normal bundle $\mathcal A(t,\xi)\sim t^{n-m-1}$ as $t\to 0$.

Let $r:M\to\mathbb R$ be the distance function from the submanifold $\Sigma$ and let $\Sigma_t=r^{-1}(t)\cap \exp_\nu(U)$ denote the part of the level set consisting of regular points of $r$.  For $u\in U$ with $|u|=t >0$, the gradient $\nabla r$ near $x=\exp_\nu(u)$ is a unit normal along $\Sigma_t$ and hence the Hessian $\nabla^2 r$ at $x$ is equivalent to the shape operator of the level set $\Sigma_t$ at $x$ denoted
$$S(t,\xi):T_x\Sigma_t\to T_x\Sigma_t.$$
The relative rate of change of the polar volume density in the radial direction is precisely the mean curvature $h(t,\xi)=\tr(S(t,\xi))$ of the distance level sets $\Sigma_t$; i.e.
\begin{equation}\label{eq:polardensity}
\mathcal A' = h\mathcal A.
\end{equation}

Finally, we fix  notation concerning the extrinsic geometry of $\Sigma$.  For $\xi\in\hat\nu$ based at $x\in\Sigma$, let $S_\xi:T_x\Sigma\to T_x\Sigma$ denote the Weingarten map $(\nabla\xi)^\top$ where $\xi$ is extended arbitrarily to a section of $\hat\nu$ and $^\top$ denotes projection onto $T_x\Sigma$.  Define the (normalized) mean curvature normal $\eta$ along $\Sigma$ with sign convention chosen so that for all $\xi\in\hat\nu$
$$\langle \eta,\xi\rangle = \frac{\tr(S_\xi)}{m}.$$

\subsection{Evolution equation for the shape operator}\label{sec:setup2}  
For this section we fix $\xi\in\hat\nu$ based at $x\in \Sigma$.  The one-parameter family of shape operators $S(t)=S(t,\xi)$ satisfies the Riccati differential equation
\begin{equation}\label{eq:riccati}
S' + S^2 = - R_{\partial_r}
\end{equation}
where $\partial_r=\nabla r$ denotes the gradient, $R_{\partial_r}$ denotes the directional curvature operator $R_{\partial_r}(X)=R(X,\partial_r)\partial_r$, and the prime notation denotes the covariant derivative in the direction $\partial_r$ (see e.g. \cite[Corollary 3.2.10]{P16}).

Of crucial importance in the following analysis are the initial conditions for the matrix Riccati equation above, which depends on the nature of the extension of the family of operators $S(t)$ to $t=0$.  In fact, since the square of the distance function is smooth in a neighborhood of $\Sigma$ the scaled family $tS(t)$ extends smoothly through $t=0$.  Putting $\gamma(t)=\exp_\nu(t\xi)$, if we identify the vector spaces $\dot\gamma^\perp\subset T_{\gamma(t)}M$ via parallel transport along $\gamma$ with the single vector space $E=\dot\gamma(0)^\perp$ then expanding in a Taylor series one obtains
\begin{equation}\label{eq:taylor}
S(t) = \frac{1}{t} P_{\xi} + S_\xi + O(t)
\end{equation}
where $P_\xi:E\to E$ denotes the orthogonal projection onto $\nu_x\cap \xi^\perp$ and $S_{\xi}$ is the Weingarten map extended trivially to the orthogonal complement of $T_x\Sigma$ in $E$.

\section{Hessian Comparison}\label{sec:hessiancomparison}

Using the notation from Section \ref{sec:prelim}, we now state and prove a more complete version of the Hessian comparison theorem given in the introduction.  The idea is that we can control certain partial traces of the Hessian of a distance function using the $k$-Ricci curvature.  For a linear operator $T$ on a real inner product space $\mathcal V$, and a $k$-dimensional subspace $\mathcal W\subset\mathcal V$, the partial trace of $T$ on $\mathcal W$ is defined by
$$\tr_\mathcal W(T)=\sum_{i=1}^k \langle T(e_i),e_i\rangle$$
where $\{e_1,...,e_{k}\}$ is any orthonormal basis of $\mathcal W$.  

Define the generalized trigonometric functions $\sn_H$ and $\cs_H$ by
$$\sn_H(r) = \left\{
\begin{array}{l r}
\frac{1}{\sqrt{H}}\sin(\sqrt{H}r) & H > 0 \\
r & H = 0 \\
\frac{1}{\sqrt{-H}}\sinh(\sqrt{-H} r) & H < 0
\end{array}\right.
$$
and $\cs_H(r)=\sn_H'(r)$.

\begin{lemma}[Hessian Comparison]\label{lem:hessiancomparison}
Let $\Sigma^m$ be an $m$-dimensional submanifold of a complete Riemannian manifold $M^n$.  Fix any $\xi\in\hat\nu(\Sigma)$ and put $\gamma(t)=\exp_\nu(t\xi)$.  Let $\mathcal W_0$ be any $k$-dimensional subspace of $\xi^\perp$ and let $\mathcal W_t\subset \dot\gamma(t)^\perp$ denote its parallel translation along $\gamma$.  If
$$Ric_k(\dot\gamma, \mathcal W_t)\geq kH$$
for some constant $H$, then for $t$ less than the focal distance in the direction $\xi$, we have
\begin{equation}\label{eq:model}
\tr_{\mathcal W_t}(S(t,\xi))\leq 
\left\{
\begin{array}{l r}
k\log(\cs_H(t) + w_0\sn_H(t))' & \textup{ if } \mathcal W_0\subset T\Sigma \\
k\log(\sn_H(t))' & \textup{ otherwise }
\end{array}\right.
\end{equation}
where $w_0=\tr_{\mathcal W_0}(S_\xi)/k$.  If equality holds at $t_0$, then equality holds on $(0,t_0]$ and
\begin{enumerate}
\item $H$ is an eigenvalue of $R_{\dot\gamma}$ with $\mathcal W_{t_0}$ contained in the corresponding eigenspace,
\item $\mathcal W_{t_0}^\perp$ is an invariant subspace of $R_{\dot\gamma}$, and
\item either $\mathcal W_0\subset \nu$ or $\mathcal W_0$ is contained in an eigenspace of $S_\xi$.
\end{enumerate}
\end{lemma}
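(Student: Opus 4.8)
The strategy is to reduce the partial-trace inequality to a scalar Riccati comparison. Starting from the matrix Riccati equation \eqref{eq:riccati}, I would restrict attention to the parallel subspace $\mathcal W_t$. Since $\{e_i(t)\}$ is parallel along $\gamma$, differentiating $\langle S(t)e_i, e_i\rangle$ gives $\langle S'(t)e_i,e_i\rangle$, so setting $\phi(t) = \tr_{\mathcal W_t}(S(t,\xi))$ and using \eqref{eq:riccati} yields
$$\phi'(t) = -\sum_{i=1}^k \langle S(t)^2 e_i, e_i\rangle - \sum_{i=1}^k \langle R_{\dot\gamma}e_i, e_i\rangle = -\sum_{i=1}^k \norm{S(t)e_i}^2 - Ric_k(\dot\gamma,\mathcal W_t).$$
Now apply Cauchy–Schwarz in the form $\sum_i \norm{S(t)e_i}^2 \geq \sum_i \langle S(t)e_i,e_i\rangle^2 \geq \frac{1}{k}\left(\sum_i \langle S(t)e_i,e_i\rangle\right)^2 = \phi^2/k$, together with the hypothesis $Ric_k(\dot\gamma,\mathcal W_t)\geq kH$, to obtain the scalar Riccati inequality $\phi' + \phi^2/k \leq -kH$. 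Writing $\psi = \phi/k$, this is $\psi' + \psi^2 \leq -H$, the same inequality satisfied (with equality) by $\psi_H := (\log \sn_H)' = \cs_H/\sn_H$ and, more generally, by the logarithmic derivative of $\cs_H(t) + w_0\sn_H(t)$.

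Next I would pin down the initial conditions. From the Taylor expansion \eqref{eq:taylor}, $S(t) = \frac{1}{t}P_\xi + S_\xi + O(t)$, where $P_\xi$ is the projection onto $\nu_x \cap \xi^\perp$. If $\mathcal W_0 \subset T\Sigma$, then $P_\xi$ annihilates $\mathcal W_0$, so $\phi(t) = \tr_{\mathcal W_0}(S_\xi) + O(t) \to k w_0$ as $t\to 0$, i.e. $\psi(0) = w_0$; this matches the initial value of $(\log(\cs_H + w_0\sn_H))'$, which at $t=0$ equals $w_0/1 = w_0$ since $\cs_H(0)=1$, $\sn_H(0)=0$, $\cs_H'(0)=-H\cdot 0=0$, $\sn_H'(0)=1$. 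Otherwise $\mathcal W_0$ contains a vector with nonzero $\nu_x\cap\xi^\perp$-component, and the $\frac{1}{t}P_\xi$ term forces $\phi(t)\sim c/t$ with $c\geq 1$ as $t\to 0$; comparing against $k(\log\sn_H)'\sim k/t$ and using that the comparison solution with the largest blow-up rate dominates, one gets $\psi \leq (\log\sn_H)'$. In both cases the conclusion follows from the standard Riccati comparison lemma: if $\psi' + \psi^2 \leq -H = \bar\psi' + \bar\psi^2$ on an interval and $\psi \leq \bar\psi$ near the left endpoint (in the appropriate limiting sense), then $\psi \leq \bar\psi$ throughout, since $(\psi - \bar\psi)' \leq -(\psi+\bar\psi)(\psi-\bar\psi)$ forces $\psi - \bar\psi$ to stay nonpositive by Grönwall.

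For the rigidity statement, suppose $\phi(t_0) = k\bar\psi(t_0)$ for one of the model functions $\bar\psi$. By the comparison argument $\psi \equiv \bar\psi$ on $(0,t_0]$, hence all the intermediate inequalities are equalities there: $\sum_i\norm{S(t)e_i}^2 = \frac{1}{k}\phi^2$ (forcing each $e_i$ to be an eigenvector of $S(t)$ with common eigenvalue $\psi$, which gives that $S(t)$ restricted to $\mathcal W_t$ is scalar, and via the Cauchy–Schwarz equality case that $S(t)$ maps $\mathcal W_t$ into itself, so $\mathcal W_t^\perp$ is $S(t)$-invariant too), and $Ric_k(\dot\gamma,\mathcal W_t) = kH$ with $S(t)|_{\mathcal W_t} = \psi\,\mathrm{Id}$. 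Feeding $S(t)|_{\mathcal W_t} = \psi\,\mathrm{Id}$ back into \eqref{eq:riccati} restricted to $\mathcal W_t$ shows $R_{\dot\gamma}$ acts as $(-\psi' - \psi^2)\,\mathrm{Id} = H\,\mathrm{Id}$ on $\mathcal W_t$, giving (1); the invariance of $\mathcal W_{t_0}^\perp$ under $S(t)$, combined with \eqref{eq:riccati} again, gives (2); and (3) comes from analyzing \eqref{eq:taylor}: the $O(t)$-error forces $\mathcal W_0$ either to lie in $\nu$ (if the $P_\xi$ part is active) or, when $\mathcal W_0\subset T\Sigma$, to be an eigenspace of $S_\xi$ in order for $\tr_{\mathcal W_t}(S_\xi)/k = w_0$ to be consistent with $S(t)|_{\mathcal W_t}$ being scalar for all small $t$.

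**Main obstacle.** The analytic core — the scalar Riccati comparison — is routine. The delicate point is the $t\to 0$ behavior: justifying that when $\mathcal W_0 \not\subset T\Sigma$ the projection term $\frac{1}{t}P_\xi$ really forces $\phi(t) \geq (\text{something})/t$ with the right constant to beat $k(\log\sn_H)'$, and handling the mixed case where $\mathcal W_0$ has components both in $T\Sigma$ and in $\nu_x\cap\xi^\perp$. One must be careful that $\tr_{\mathcal W_t}(P_\xi) \geq 1$ whenever $\mathcal W_0\not\subset T\Sigma$, and then invoke the comparison on $(\epsilon, t_0]$ letting $\epsilon\to 0$, using that the singular model solution $k/t$ is the minimal blow-up among solutions of $\psi'+\psi^2 = -H$ with $\psi\to+\infty$. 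The rigidity analysis at $t\to 0$ in case (3), tracking the $O(t)$ term in \eqref{eq:taylor} carefully enough to conclude $\mathcal W_0$ is an eigenspace of $S_\xi$, is the other place requiring genuine care.
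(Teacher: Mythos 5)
Your proposal is correct and follows essentially the same route as the paper: pass to the partial trace along the parallel frame to reduce \eqref{eq:riccati} to the scalar Riccati inequality $w'+w^2\leq -H$ via Cauchy--Schwarz, match the $t\to 0$ behavior using the Taylor expansion \eqref{eq:taylor}, invoke the standard scalar Riccati comparison, and read off the rigidity statements by tracing equality through the inequalities (forcing $S(t)$ to be block diagonal with scalar block on $\mathcal W_t$, then feeding this back into \eqref{eq:riccati} and \eqref{eq:taylor}). The singular initial condition when $\mathcal W_0\not\subset T\Sigma$ is less delicate than you flag: finiteness of $\tr_{\mathcal W_t}(S)$ on $(0,t_f)$ alone already forces $w\leq\cs_H/\sn_H$ by the blow-up argument built into the Riccati comparison, so no lower bound on $\tr_{\mathcal W_0}(P_\xi)$ is actually required.
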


\begin{proof}
Using the fact that $\mathcal W_t$ is parallel along $\gamma$, we may choose a parallel orthonormal basis $\{e_1,...,e_{n-1}\}$ for $\dot\gamma^\perp$ such that $\{e_1,...,e_k\}$ form a parallel orthonormal basis of $\mathcal W_t$.  For $0<t<t_f$, where $t_f$ is the focal distance along $\gamma$, the shape operators $S(t)=S(t,\xi)$ satisfy the Riccati equation \eqref{eq:riccati}, and since $\mathcal W_t$ is parallel along $\gamma$ the partial trace $\tr_{\mathcal W_t}$ commutes with the covariant derivative along $\gamma$ and hence 
$$\tr_{\mathcal W_t}(S)' + \tr_{\mathcal W_t}(S^2) = -Ric_k(\dot\gamma, \mathcal W_t).$$
Putting $s_{ij} = \langle Se_i,e_j\rangle$ and using the symmetry $s_{ij}=s_{ji}$ of the Hessian we have
$$\tr_{\mathcal W_t}(S^2) = \sum_{i=1}^k\sum_{j=1}^{n-1} s_{ij}^2\geq \sum_{i,j=1}^k s_{ij}^2\geq \sum_{i=1}^k s_{ii}^2\geq \frac{1}{k} \left(\sum_{i=1}^k s_{ii}\right)^2=\frac{1}{k}\tr_{\mathcal W_t} (S)^2$$
where the last inequality follows from the Cauchy-Schwarz inequality.  Putting $w(t)=\tr_{\mathcal W_t}(S(t))/k$ we have
\begin{equation}\label{eq:kriccati}
w'(t) + w(t)^2 \leq -Ric_k(\dot\gamma,\mathcal W_t)/k \leq -H.
\end{equation}
Noting that the model functions on the right hand of equation \eqref{eq:model} satisfy the Riccati equation $(f/k)' + (f/k)^2 = -H$, we may apply the comparison theory for this equation provided we match the initial conditions.  Using the Taylor expansion \eqref{eq:taylor} we see that if $\mathcal W_0\subset T_{\gamma(0)}\Sigma$ then $w(t)\to w_0$ as $t\to 0$, and otherwise $w(t)=O(t^{-1})$ as $t\to 0$.  The inequality now follows from the comparison theory for the scalar Riccati equation (see e.g. \cite[Proposition 6.4.1]{P16}).

If equality holds for some $t_0\leq t_f$, then the aforementioned Riccati comparison principle implies that equality holds on $(0,t_0]$.  From the inequalities above, it follows that with respect to the parallel basis $\{e_i\}$ the matrix representation of $S(t)$ on $(0,t_0]$ is block diagonal of the form
$$S(t)=\begin{pmatrix}
w(t)I_k & 0 \\
0 & *
\end{pmatrix}
$$
where $I_k$ is the $k\times k$ identity matrix.  Since this decomposition holds on $(0,t_0]$, it follows from the expansion \eqref{eq:taylor} that either $\mathcal W_0\subset T\Sigma$ or $\mathcal W_0\subset \nu$.  Moreover, if $\mathcal W_0\subset T\Sigma$ then \eqref{eq:taylor} implies that $\mathcal W_0$ is contained in an eigenspace of $S_\xi$ with eigenvalue $w_0=\lim_{t\to 0} w(t)$.  Parts (1) and (2) then follow from the observation that $-S'-S^2=R_{\dot\gamma}$ is also block diagonal of the same form, replacing $w(t)$ with $-w'-w^2=H$.
\end{proof}

From the proof above it is easily seen that equality is realized if $M$ is a space of constant curvature $H$ and $\Sigma$ is a submanifold such that condition $(3)$ in the lemma holds for $\Sigma$ and $\mathcal W_t$.

Our main applications of this lemma will be to the volume inequalities of the next section; however, we also recover an upper bound on the focal radius which was recently obtained by Guijarro and Wilhelm using a different Jacobi field comparison for $k$-Ricci curvature \cite{GW18}.

\begin{corollary}\label{cor:focalradius}
Let $\Sigma$ be a submanifold of a complete Riemannian manifold $M^n$ with $\dim(\Sigma)\geq k$.  If $Ric_k\geq k\cdot H>0$ then the focal radius of $\Sigma$ is at most $\frac{\pi}{2\sqrt{H}}$ and this focal radius is achieved if and only if $\Sigma$ is totally geodesic.
\end{corollary}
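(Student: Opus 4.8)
The plan is to deduce this from Lemma~\ref{lem:hessiancomparison} by integrating the mean curvature comparison to control the polar volume density, and to observe that focal points force a blow-up of the shape operator which the comparison forbids before radius $\tfrac{\pi}{2\sqrt H}$. More precisely, fix $\xi\in\hat\nu(\Sigma)$ with $\gamma(t)=\exp_\nu(t\xi)$, and choose $\mathcal W_0\subset T_{\gamma(0)}\Sigma$ to be any $k$-dimensional subspace of the tangent space of $\Sigma$ (possible since $\dim\Sigma\ge k$). Since $Ric_k(\dot\gamma,\mathcal W_t)\ge kH$, Lemma~\ref{lem:hessiancomparison} gives, for $t$ less than the focal distance $t_f(\xi)$ along $\gamma$,
\begin{equation*}
\tr_{\mathcal W_t}(S(t,\xi)) \le k\log\bigl(\cs_H(t) + w_0\sn_H(t)\bigr)',
\end{equation*}
where $w_0 = \tr_{\mathcal W_0}(S_\xi)/k$. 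The function $\cs_H(t)+w_0\sn_H(t) = \cos(\sqrt H\,t) + w_0 H^{-1/2}\sin(\sqrt H\,t)$ is a nontrivial solution of $f'' + Hf = 0$ with $f(0)=1>0$, hence has a first positive zero $t_* = \tfrac{1}{\sqrt H}\bigl(\tfrac{\pi}{2} + \arctan\frac{w_0}{\sqrt H}\bigr) \le \tfrac{1}{\sqrt H}\bigl(\tfrac{\pi}{2}+\tfrac{\pi}{2}\bigr)$; and if $w_0\le 0$ we get the sharper bound $t_*\le \tfrac{\pi}{2\sqrt H}$. The point is that as $t\uparrow t_*$ the right-hand side of the comparison tends to $-\infty$, so $\tr_{\mathcal W_t}(S(t,\xi))\to-\infty$, which is impossible as long as $S(t,\xi)$ remains a bounded self-adjoint operator, i.e. for $t<t_f(\xi)$. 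Hence $t_f(\xi)\le t_*$.

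To get the uniform bound $\tfrac{\pi}{2\sqrt H}$ without the $\arctan$ correction, I would average over the directions spanning $\mathcal W_0$. Specifically, run the above for each of a family of $k$-dimensional subspaces whose union exhausts $T_{\gamma(0)}\Sigma$ — or more cleanly, note that the first focal point along $\gamma$ is detected by the existence of a nonzero $\Sigma$-Jacobi field vanishing at $\gamma(t_f)$, and such a field can be taken with initial velocity lying in a $k$-plane $\mathcal W_0\subset T\Sigma$ on which $\tr_{\mathcal W_0}(S_\xi)\le 0$ by an averaging/pigeonhole argument: the trace of $S_\xi$ over all of $T_{\gamma(0)}\Sigma$ need not be nonpositive, but one can always select a $k$-dimensional subspace on which the partial trace is at most the average, and if necessary replace $\xi$ by $-\xi$, which flips the sign of $S_\xi$; since a focal point in direction $\xi$ at distance $t_f$ is unaffected by this sign choice only in the totally geodesic rigidity discussion, care is needed here — alternatively, simply accept the bound $\tfrac{\pi}{\sqrt H}\bigl(\tfrac12 + \tfrac1\pi\arctan\tfrac{w_0}{\sqrt H}\bigr)$ and note it is $\le \tfrac{\pi}{2\sqrt H}$ exactly when $w_0\le 0$, then handle the general case by a continuity/deformation argument. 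In fact the cleanest route is: among the $k$ principal directions of $S_\xi$ restricted to any $k$-plane, the one with smallest principal curvature gives $w_0$ as small as the $k$-th smallest eigenvalue of $S_\xi$; but we do not even need this, because for the focal radius statement we may assume $\Sigma$ is not totally geodesic (else we are in the equality case) and argue that strict inequality $\tr_{\mathcal W_t}(S)<k\log(\sn_H(t))'$ holds, whose right side vanishes at $\tfrac{\pi}{2\sqrt H}$...

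I realize the honest and simplest argument is this: by Lemma~\ref{lem:hessiancomparison}, whatever $\mathcal W_0\subset T\Sigma$ we pick, $w(t)=\tr_{\mathcal W_t}(S(t,\xi))/k$ satisfies $w'+w^2\le -H$ with $w(0^+)=w_0$ finite, and the comparison solution $\bar w(t)=\log(\cs_H(t)+w_0\sn_H(t))'$ blows down to $-\infty$ at $t_*(w_0)\le \tfrac{\pi}{2\sqrt H} + \tfrac{1}{\sqrt H}\arctan\tfrac{w_0^+}{\sqrt H}$. Choosing $\mathcal W_0$ to be a $k$-plane on which $S_\xi$ has partial trace $\le 0$ — which exists whenever the smallest eigenvalue of $S_\xi\!\restriction_{T\Sigma}$ is $\le 0$, and when it is $>0$ we replace $\xi$ by $-\xi$, noting that the minimal focal distance over the unit normal sphere is what we are bounding and focal points come in the pair $\{\xi,-\xi\}$ only via the geometry... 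For the rigidity claim, equality $t_f = \tfrac{\pi}{2\sqrt H}$ propagates back to equality in Lemma~\ref{lem:hessiancomparison} on $(0,t_f]$, forcing $w_0=0$ for the chosen $\mathcal W_0$, and running this over a spanning family of $k$-planes forces $S_\xi\equiv 0$ for every $\xi\in\hat\nu$, i.e. $\Sigma$ is totally geodesic; conversely if $\Sigma$ is totally geodesic then $w_0=0$ and the model $\cs_H(t)$ first vanishes exactly at $\tfrac{\pi}{2\sqrt H}$, and one checks a Jacobi field realizing the focal point exists there, so the focal radius equals $\tfrac{\pi}{2\sqrt H}$.

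The main obstacle is the bookkeeping around the sign of the mean-curvature term $w_0$: Lemma~\ref{lem:hessiancomparison} gives a bound that degrades when $w_0>0$, so the crux is arguing that the relevant focal point is always witnessed by a $k$-plane $\mathcal W_0\subset T\Sigma$ with $\tr_{\mathcal W_0}(S_\xi)\le 0$ — either by choosing the $k$ smallest principal curvatures of $S_\xi$ (and checking their sum is $\le 0$, which requires $\Sigma$ to have a nonpositive principal curvature in direction $\xi$), or, failing that, by exploiting that the focal-radius infimum over $\hat\nu$ lets us pass to $-\xi$. I expect to spell out that for a non-totally-geodesic $\Sigma$ one can always find such a direction, so the clean bound $\tfrac{\pi}{2\sqrt H}$ holds, with equality exactly in the totally geodesic case as analyzed via the equality clauses of Lemma~\ref{lem:hessiancomparison}.
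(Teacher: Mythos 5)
Your overall strategy is the right one and matches the paper: apply Lemma~\ref{lem:hessiancomparison} with $\mathcal W_0\subset T_x\Sigma$ so that the model function $\cs_H(t)+w_0\sn_H(t)$ vanishes by $\pi/(2\sqrt H)$ whenever $w_0\le0$, conclude a focal point must occur by then, and use the $\xi\mapsto-\xi$ symmetry to arrange the sign of $w_0$. But there is a concrete error in the one place you commit to a precise claim, and more broadly the argument never settles.

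The error: you assert that a $k$-plane in $T_x\Sigma$ with $\tr_{\mathcal W_0}(S_\xi)\le 0$ exists ``whenever the smallest eigenvalue of $S_\xi\!\restriction_{T\Sigma}$ is $\le 0$.'' That is false -- eigenvalues $(-1,2,3,4,5)$ with $k=3$ give the $k$ smallest summing to $4>0$. The correct sufficient condition is $\tr(S_\xi)\le 0$, i.e.\ $\langle\eta,\xi\rangle\le 0$: the $k$ smallest eigenvalues sum to at most $\tfrac{k}{m}\tr(S_\xi)$. This is exactly what the $\xi\mapsto-\xi$ replacement buys, since $\langle\eta,-\xi\rangle=-\langle\eta,\xi\rangle$.

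The paper avoids all of this bookkeeping by taking $\mathcal W_0 = T_x\Sigma$ itself (dimension $m$, not $k$), which is legitimate because $Ric_k\ge kH$ implies $Ric_m\ge mH$ for $m\ge k$ by averaging over $k$-subspaces. Then $w_0=\langle\eta,\xi\rangle$ directly, flipping $\xi$ makes $w_0\le 0$ trivially, the model blows down at some $t_*\le\pi/(2\sqrt H)$ with equality iff $w_0=0$, and the bound follows. For rigidity, if the focal radius is $\pi/(2\sqrt H)$ then $w_0<0$ is impossible (it would force $t_f<\pi/(2\sqrt H)$), so $\langle\eta,\xi\rangle=0$ for every $\xi$, i.e.\ $\eta\equiv0$; part~(3) of the lemma then forces $T_x\Sigma$ into a single eigenspace of each $S_\xi$, so $\Sigma$ is totally umbilic, and minimality upgrades this to totally geodesic. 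You should also drop the digressions about which $k$-plane ``witnesses'' the focal point: once $\tr_{\mathcal W_t}(S(t,\xi))$ is forced to $-\infty$ along the parallel frame, $S$ is unbounded and the focal distance is reached -- nothing more is needed.
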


\begin{proof}[Proof of Corollary \ref{cor:focalradius}]
Given $x\in\Sigma$ let $\xi\in\hat\nu(\Sigma)$ be any unit normal based at $x$, and put $\gamma(t)=\exp_\nu(t\xi)$.  By replacing $\xi$ with $-\xi$ if necessary, we may assume $\langle \eta,\xi\rangle \leq 0$ where $\eta$ is the mean curvature vector of $\Sigma$.  Putting $\mathcal W_0=T_x\Sigma$ and applying the lemma, we find that $\tr_{\mathcal W_t}(S(t,\xi))$ diverges to $-\infty$ for some $t\leq\pi/(2\sqrt{H})$.  Moreover, if equality holds for all $\xi\in\hat\nu(\Sigma)$ then $\eta\equiv 0$ and it follows from part (3) of the lemma that $\Sigma$ is totally umbilic, and hence totally geodesic.
\end{proof}

\section{Volume inequalities using $k$-Ricci curvature bounds}\label{sec:pointwiseestimate}

Using the Hessian comparison above we prove two different Heintze-Karcher type volume inequalities using $k$-Ricci lower bounds.
\begin{theorem}\label{thm:pointwiseestimate}
Let $M^n$ be a complete Riemannian manifold and let $\Sigma^m$ be a closed $m$-dimensional submanifold.  Put $k=\min\{m,n-m-1\}$.  If $Ric_k\geq k\cdot H$ then
$$\vol(T(\Sigma,r))\leq \int_{\hat\nu}\int_0^{z(r,\xi)} (\cs_H(t) + \langle \eta,\xi \rangle \sn_H(t))^m \sn_H(t)^{n-m-1}dtd\xi .$$
where $z(r,\xi)$ denotes the minimum of $r$ and the first zero of the integrand.
\end{theorem}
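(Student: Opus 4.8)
The plan is to bound the mean curvature $h(t,\xi)=\tr S(t,\xi)$ of the distance level sets $\Sigma_t$ by the logarithmic derivative of the candidate density $g(t)=(\cs_H(t)+\langle\eta,\xi\rangle\sn_H(t))^m\sn_H(t)^{n-m-1}$, then to integrate $\mathcal A'=h\mathcal A$ (equation \eqref{eq:polardensity}) against this bound and feed the result into the coarea formula \eqref{eq:volumeintegral}. Assume $0<m<n-1$ so that $k\geq 1$; the extreme cases are the classical Heintze--Karcher inequality. Fix $\xi\in\hat\nu(\Sigma)$ based at $x\in\Sigma$, put $\gamma(t)=\exp_\nu(t\xi)$, and let $c(\xi)=\sup\{t>0:t\xi\in U\}$, so that $\mathcal A(\cdot,\xi)$ is smooth and positive on $(0,c(\xi))$, vanishes for $t\geq c(\xi)$, and $c(\xi)$ does not exceed the focal distance along $\gamma$. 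On $(0,c(\xi))$ one also has $\sn_H>0$ (trivially if $H\leq 0$, and by Corollary \ref{cor:focalradius} if $H>0$). Choose a parallel orthonormal frame $e_1(t),\dots,e_{n-1}(t)$ of $\dot\gamma(t)^\perp$ with $e_1(0),\dots,e_m(0)$ spanning $T_x\Sigma$ and $e_{m+1}(0),\dots,e_{n-1}(0)$ spanning $\nu_x\cap\xi^\perp$, and put $s_{ij}=\langle S(t,\xi)e_i,e_j\rangle$, so that $h=\sum_{i=1}^m s_{ii}+\sum_{i=m+1}^{n-1}s_{ii}$.

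The heart of the argument is to estimate these two partial sums via Lemma \ref{lem:hessiancomparison}, applied to $k$-dimensional parallel subspaces and then averaged; here the hypothesis $k=\min\{m,n-m-1\}$ is exactly what makes both blocks accessible. For each $k$-element subset $J\subseteq\{1,\dots,m\}$ the span $\mathcal W_0^J$ of $\{e_j(0):j\in J\}$ lies in $T_x\Sigma$, so the lemma gives, on $(0,c(\xi))$,
\[
\sum_{j\in J}s_{jj}\leq k\log\bigl(\cs_H(t)+w_0^J\sn_H(t)\bigr)',\qquad w_0^J=\tfrac1k\sum_{j\in J}\langle S_\xi e_j,e_j\rangle ,
\]
while for each $k$-element subset $J\subseteq\{m+1,\dots,n-1\}$ the span lies in $\nu_x$, hence is not contained in $T\Sigma$, so the lemma gives $\sum_{j\in J}s_{jj}\leq k\log(\sn_H(t))'$. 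Averaging the first family over the $\binom{m}{k}$ subsets, one uses the combinatorial identities $\tfrac1{\binom{m}{k}}\sum_J\sum_{j\in J}s_{jj}=\tfrac km\sum_{i=1}^m s_{ii}$ and $\tfrac1{\binom{m}{k}}\sum_J w_0^J=\tfrac1m\tr S_\xi=\langle\eta,\xi\rangle$, together with the concavity in $w$ of
\[
w\longmapsto\log\bigl(\cs_H(t)+w\sn_H(t)\bigr)'=\frac{w\cs_H(t)-H\sn_H(t)}{w\sn_H(t)+\cs_H(t)}
\]
on the half-line where the denominator is positive --- its second derivative there equals $-2\sn_H(t)\bigl(w\sn_H(t)+\cs_H(t)\bigr)^{-3}<0$, using the identity $\cs_H^2+H\sn_H^2\equiv 1$ --- so that Jensen's inequality yields $\sum_{i=1}^m s_{ii}\leq m\log(\cs_H(t)+\langle\eta,\xi\rangle\sn_H(t))'$. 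The (Jensen-free) average of the second family gives $\sum_{i=m+1}^{n-1}s_{ii}\leq(n-m-1)\log(\sn_H(t))'$. Adding the two, $h(t,\xi)\leq(\log g)'(t)$ on $(0,c(\xi))$; note in particular that $g>0$ on this interval, since each $\cs_H+w_0^J\sn_H$ is positive there (else the lemma's bound would be $-\infty$ while the partial trace is finite), hence so is the average $\cs_H+\langle\eta,\xi\rangle\sn_H$.

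It remains to integrate and handle the cut locus. On $(0,c(\xi))$ one has $(\log(\mathcal A/g))'=h-(\log g)'\leq 0$, and since $\mathcal A(t,\xi)\sim t^{n-m-1}\sim g(t)$ as $t\to 0^+$ this forces $\mathcal A(t,\xi)\leq g(t)$ throughout $(0,c(\xi))$. Let $T_0\in(0,\infty]$ be the first positive zero of the integrand $g$; then $c(\xi)\leq T_0$, for otherwise the inequality $\mathcal A\leq g$ on $(0,T_0)$ would give $\mathcal A(T_0,\xi)\leq\lim_{t\to T_0^-}g(t)=0$, contradicting the positivity of $\mathcal A(\cdot,\xi)$ at the interior point $T_0<c(\xi)$. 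Hence $z(r,\xi)=\min\{r,T_0\}\geq\min\{r,c(\xi)\}$, and combining \eqref{eq:volumeintegral}, Fubini, the bound $\mathcal A\leq g$, and $g\geq 0$ on $(0,T_0)$ gives
\[
\vol(T(\Sigma,r))=\int_{\hat\nu}\int_0^{\min\{r,c(\xi)\}}\mathcal A(t,\xi)\,dt\,d\xi\leq\int_{\hat\nu}\int_0^{\min\{r,c(\xi)\}}g(t)\,dt\,d\xi\leq\int_{\hat\nu}\int_0^{z(r,\xi)}g(t)\,dt\,d\xi .
\]
I expect the two points requiring the most care to be the Jensen step --- which is what lets the subspace-averaged mean curvatures $w_0^J$ be replaced by the single quantity $\langle\eta,\xi\rangle$, keeping the hypothesis as weak as possible --- and the positivity bookkeeping showing that the model density $g$ survives all the way out to the cut distance. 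All of the curvature input is already packaged in Lemma \ref{lem:hessiancomparison}, so no Bishop--Gromov-style integration by parts is needed.
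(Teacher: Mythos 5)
Your proof is correct, and it differs from the paper's in one genuine place: how it bridges the gap between the $k$-Ricci hypothesis and the bound on the full mean curvature $h = \phi + \psi$. The paper decomposes $h = \tr_{\mathcal H_t}(S) + \tr_{\mathcal V_t}(S)$ with $\mathcal H_t, \mathcal V_t$ the parallel translates of $T_x\Sigma$ and $\nu_x(\Sigma)$, and applies Lemma \ref{lem:hessiancomparison} \emph{directly} to these subspaces of dimension $m$ and $n-m-1$ respectively, implicitly invoking the standard fact that a lower bound on $Ric_k$ propagates to lower bounds on $Ric_j$ for all $j\geq k$. Applied at dimension $m$, the lemma's initial datum is $w_0 = \tr_{T_x\Sigma}(S_\xi)/m = \langle\eta,\xi\rangle$ on the nose, so the model bound appears immediately with no further work. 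You, by contrast, restrict every application of the lemma to $k$-dimensional parallel subspaces --- exactly the dimension in the hypothesis --- and then average over all $\binom{m}{k}$ (respectively $\binom{n-m-1}{k}$) such subspaces, using the concavity in $w$ of $\log(\cs_H + w\sn_H)'$ (verified correctly: $F''(w) = -2\sn_H(w\sn_H + \cs_H)^{-3}$, using $\cs_H^2 + H\sn_H^2\equiv 1$) so that Jensen turns the average over the $w_0^J$ into the single quantity $\langle\eta,\xi\rangle$. Your route avoids invoking the propagation fact and keeps the Hessian comparison tied to the hypothesized $k$, at the cost of the extra combinatorial/Jensen machinery; the paper's route is shorter because it simply uses the lemma at higher dimension. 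Your positivity bookkeeping (that each $\cs_H + w_0^J\sn_H$ stays positive out to $c(\xi)$, hence so does the model density and $\mathcal A\leq g$ extends to the cut distance) and the cut-locus handling match the paper's, modulo some details the paper leaves implicit.
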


\begin{proof}
Given $\xi\in\hat\nu(\Sigma)$ based at $x\in\Sigma$ let $\gamma(t)=\exp_\nu(t\xi)$ and let $\mathcal H_t$ and $\mathcal V_t$ denote the subspaces of $\dot\gamma(t)^\perp$ parallel along $\gamma$ to $T_x\Sigma$ and $\nu_x(\Sigma)$, respectively.  Since $\mathcal H_t$ and $\mathcal V_t$ are orthogonal, the mean curvature $h(t,\xi)$ of $\Sigma_t$ at $\gamma(t)$ is given by
$$h = \phi + \psi$$
where $\phi = \tr_{\mathcal H_t}(S(t,\xi))$ and $\psi = \tr_{\mathcal V_t}(S(t,\xi))$.  Using the assumption $Ric_k\geq kH$, Lemma \ref{lem:hessiancomparison} gives
$$h(t,\xi) \leq m \log(\cs_H(t) + \langle \eta,\xi\rangle \sn_H(t))' + (n-m-1)\log(\sn_H(t))'$$
for $0<t<t_c(\xi)$ where $t_c(\xi)$ is the distance to the cut locus of $\Sigma$ in the direction $\xi$.  From equation \eqref{eq:polardensity} we have for $0<t<t_c$ the identity $$\log(\mathcal A)' = \log[(\cs_H + \langle \eta,\xi\rangle \sn_H)^m\sn_H^{n-m-1}]'.$$
Now, since $\mathcal A(t,\xi)\sim t^{n-m-1}$ as $t\to 0$ we can integrate from $0$ to $t\leq t_c$ to obtain
$$\mathcal A(t,\xi)\leq (\cs_H + \langle \eta,\xi\rangle \sn_H)^m\sn_H^{n-m-1}.$$
The result follows from equation $\eqref{eq:volumeintegral}$.
\end{proof}

Our second application of the Hessian comparison is a generalization of another Heintze-Karcher type inequality given by G. Qiu and C. Xia in \cite{QX14}.  The inequality of Qiu-Xia is motivated by similar inequalities of A. Ros and S. Brendle which have been used to prove Alexandrov's Theorem in various contexts (see \cite{R87,B13, QX14}).  

\begin{theorem}\label{thm:hkinequality}
Let $(M^n,g)$ be a compact Riemannian manifold with smooth boundary $\Sigma$ with outward unit normal $\xi$ and mean curvature vector $\eta$.  Fix a point $p\in M$ and put $f(x)=\cosh(r(x))$ where $r(x)=d(x,p)$ is the distance from $x$ to $p$ in $M$.  

If $\langle \eta,\xi\rangle>0$ everywhere on $\Sigma$ and $Ric_{n-2}\geq -(n-2)$ then 
\begin{equation}
\int_\Sigma\frac{f}{\langle \eta,\xi\rangle}dvol_\Sigma \geq \int_M (\Delta f)dvol_M.
\end{equation}
Equality holds if and only if $M$ is a geodesic ball in a space form of constant sectional curvature $-1$.
\end{theorem}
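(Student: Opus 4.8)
The plan is to split the inequality into a pointwise Laplacian comparison for $\cosh r$, which reduces the theorem to a weighted isoperimetric-type estimate, and then to establish that estimate by the Reilly-formula method of Qiu--Xia, with the $k$-Ricci hypothesis entering only through the Hessian comparison of Lemma~\ref{lem:hessiancomparison}. For the first step, fix $x$ with $r$ smooth at $x$, let $\gamma$ be the minimizing unit-speed geodesic from $p$ through $x$ with $r(\gamma(t))=t$, choose a parallel orthonormal frame $e_1,\dots,e_{n-1}$ of $\dot\gamma^\perp$, and apply Lemma~\ref{lem:hessiancomparison} (with $\Sigma=\{p\}$, so the second alternative holds and the hypothesis $Ric_{n-2}\geq-(n-2)$ is in force) to each of the $n-1$ parallel $(n-2)$-planes $\operatorname{span}\{e_i:i\neq j\}$; this gives $\sum_{i\neq j}\langle S(t)e_i,e_i\rangle\le(n-2)\coth t$ for each $j$, and averaging over $j$ yields $\Delta r\le(n-1)\coth r$ on the smooth locus of $r$. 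Since $\nabla^2(\cosh r)=\cosh r\,dr\otimes dr+\sinh r\,\nabla^2 r$, tracing gives $\Delta(\cosh r)\le\cosh r+(n-1)\sinh r\coth r=n\cosh r$, hence $\int_M\Delta(\cosh r)\,dvol_M\le n\int_M\cosh r\,dvol_M$. It therefore suffices to prove the weighted inequality $n\int_M f\,dvol_M\le\int_\Sigma f/\langle\eta,\xi\rangle\,dvol_\Sigma$ with $f=\cosh r$.

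For the weighted inequality I would solve a Dirichlet problem for (a drift form of) the Laplacian with $u=0$ on $\Sigma$, chosen so that the divergence theorem expresses $\int_M f\,dvol_M$ as a boundary integral $\int_\Sigma f\,\partial_\xi u\,dvol_\Sigma$, and then apply a Reilly-type formula weighted by $f$ to $u$. Because $u$ vanishes on $\Sigma$, its tangential gradient vanishes there and the boundary term of the formula reduces to $\int_\Sigma f\,H_\Sigma(\partial_\xi u)^2\,dvol_\Sigma$ with $H_\Sigma=\tr S_\xi=(n-1)\langle\eta,\xi\rangle$; combining a suitable interior Cauchy--Schwarz inequality adapted to the weight with the boundary inequality $\big(\int_\Sigma f\,\partial_\xi u\big)^2\le\big(\int_\Sigma f\,H_\Sigma(\partial_\xi u)^2\big)\big(\int_\Sigma f/H_\Sigma\big)$ then yields $n\int_M f\le\int_\Sigma f/\langle\eta,\xi\rangle$, completing the reduction.

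The main obstacle, and the place where $Ric_{n-2}\ge-(n-2)$ is genuinely needed rather than merely a lower Ricci bound, is controlling the curvature and Hessian terms that the weighted Reilly formula produces in the interior. After integration by parts these assemble into a single interior integral whose sign must be pinned down; for $f=\cosh r$ the curvature contribution is $f\big(Ric+(n-1)g\big)(\nabla u,\nabla u)$, which is nonnegative since $Ric_{n-2}\ge-(n-2)$ forces $Ric\ge-(n-1)$, while the Hessian contribution, after substituting $\nabla^2 f=\cosh r\,dr\otimes dr+\sinh r\,\nabla^2 r$ and using the cancellations in the Reilly identity, reduces to partial traces $\tr_{\mathcal W}(\nabla^2 r)$ of $\nabla^2 r$ over subspaces $\mathcal W$ orthogonal to $\nabla r$, together with the full trace $\Delta r$ already handled above. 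Lemma~\ref{lem:hessiancomparison} bounds exactly such partial traces by $(\dim\mathcal W)\coth r$, whereas a lower bound on the full Ricci curvature of $M$ would leave an individual sectional direction of $\nabla^2 r$ uncontrolled; this is precisely why the argument produces the $(n-2)$-Ricci hypothesis and does not settle the Ricci question of Qiu--Xia. Carrying out this bookkeeping — verifying that the residual Hessian quantity is indeed a partial trace perpendicular to $\nabla r$ and tracking the cancellations that eliminate the unwanted single-direction terms — is the technical heart of the proof. Finally, the equality case is obtained by reversing the inequalities: equality forces $\Delta r=(n-1)\coth r$, so the radial curvature is that of $\mathbb H^n$ by the rigidity in Lemma~\ref{lem:hessiancomparison}; equality in the interior Cauchy--Schwarz forces $\nabla^2 u$ to be pointwise proportional to $g$; and equality in the boundary Cauchy--Schwarz forces $H_\Sigma\,\partial_\xi u$ to be constant along $\Sigma$; assembling these, an Obata-type argument identifies $M$ with a geodesic ball in the space form of constant curvature $-1$.
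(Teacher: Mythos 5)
Your proposal diverges from the paper's argument in two structural ways, and both matter.

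First, the reduction. You propose to prove $\int_M\Delta f\le n\int_M f$ by Laplacian comparison and then establish the Brendle--Ros-type inequality $n\int_M f\le\int_\Sigma f/\langle\eta,\xi\rangle$. This reduction is logically valid \emph{if} the second inequality holds, but note that it is strictly stronger than the theorem: $\int_M\Delta f$ and $n\int_M f$ differ by $\int_M(nf-\Delta f)\ge 0$, which is positive unless $M$ is the model space. You have therefore replaced the target by a harder statement without justification, and the paper never establishes it. The paper instead proves the theorem \emph{directly}: it applies the Qiu--Xia weighted Reilly formula (Theorem~\ref{thm:reilly}) with $\lambda=-1$ and $u$ solving $\Delta u=nu$, $u|_\Sigma=c>0$ (not the zero Dirichlet condition you propose), and the resulting boundary manipulation yields $\int_\Sigma f/\langle\eta,\xi\rangle\ge\int_\Sigma\langle\nabla f,\xi\rangle=\int_M\Delta f$ with no detour through $n\int_M f$.

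Second, and more seriously, the ``technical heart'' of your argument is left unexecuted. You assert that after substitution and ``cancellations'' the interior terms reduce to partial traces of $\nabla^2 r$ and that ``carrying out this bookkeeping \dots{} is the technical heart of the proof,'' but you do not do it. The paper avoids this bookkeeping entirely by a cleaner observation which you should have extracted from the Hessian comparison: Lemma~\ref{lem:hessiancomparison} with $\Sigma=\{p\}$ and $k=n-2$ gives $\sum_{j\neq i}\kappa_j\le(n-2)\coth r$ for each $i$, and together with $\Delta r\le(n-1)\coth r$ this yields the single \emph{pointwise} inequality
\begin{equation*}
(\Delta f)\,g-\nabla^2 f\le(n-1)f\,g,
\end{equation*}
the substatic condition \eqref{eq:substaticpotential}. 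Once this is in hand, the two interior integrands in Theorem~\ref{thm:reilly} (with $\lambda=-1$) are each manifestly nonpositive --- one because $\Delta f\le nf$ (the trace of the substatic inequality), the other because $\bigl((\Delta f)g-\nabla^2 f-f\,\mathrm{Ric}-2(n-1)fg\bigr)(\nabla u,\nabla u)\le -f\bigl(\mathrm{Ric}+(n-1)g\bigr)(\nabla u,\nabla u)\le 0$. There is nothing left to track. Your intuition about why $\mathrm{Ric}_{n-2}$ rather than full Ricci is needed is correct --- the individual partial traces $\sum_{j\neq i}\kappa_j$ are exactly what must be bounded --- but you should package this as the pointwise substatic inequality and then hand the rest to the weighted Reilly formula as stated, rather than re-deriving the formula with a different boundary condition and an unspecified drift operator.
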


\begin{remark}
This theorem was proved in \cite{QX14} using the assumption $Ric_1\geq -1$ (i.e. $\sec\geq -1$) in place of $Ric_{n-2}\geq -(n-2)$.  The theorem above shows that an $(n-2)$-Ricci lower bound suffices.
\end{remark}

The proof is based on the following weighted version of Reilly's formula.

\begin{theorem}[Qiu-Xia, 2014]\label{thm:reilly}
Let $(M^n,g)$ be a compact Riemannian manifold with smooth boundary $\Sigma$ and outward unit normal $\xi$ and let $f:M\to\mathbb R$ be a.e. twice differentiable.  Given $u\in C^\infty(M)$ and $\lambda \in \mathbb R$ such that $u\vert_\Sigma=u_0$ is constant
\begin{align*}
 (n-1)\int_\Sigma& f \langle \eta,\xi\rangle \langle \nabla u, \xi\rangle ^2 + 2\lambda f \langle \nabla u, \xi\rangle u_0-  \langle \nabla f , \xi\rangle \lambda u_0^2  dvol_\Sigma \\
&=\int_M f\left( (\Delta u + n\lambda u)^2 - |\nabla^2 u + \lambda u g|^2  \right) -  (n-1) \lambda (\Delta f + n \lambda f)u^2 dvol_M \\
&+\int_M \left( \Delta f g - \nabla^2 f - fRic + 2(n-1)\lambda f g \right)(\nabla u,\nabla u) dvol_M
\end{align*}
where $\eta$ is the mean curvature vector of $\Sigma$.
\end{theorem}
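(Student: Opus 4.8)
The plan is to prove this by the weighted Bochner technique, carried out in two stages: first the case $\lambda = 0$, then the general case by a short algebraic reduction to it. Throughout I would abbreviate $u_0 = u|_\Sigma$ (a constant, by hypothesis), so that $\nabla u = \langle\nabla u,\xi\rangle\xi$ along $\Sigma$.

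\emph{Stage 1 (the case $\lambda = 0$).} Starting from Bochner's formula $\tfrac12\Delta|\nabla u|^2 = |\nabla^2 u|^2 + \langle\nabla u,\nabla\Delta u\rangle + Ric(\nabla u,\nabla u)$, I would multiply by $f$ and integrate over $M$. Applying the divergence theorem (outward normal $\xi$) to $\int_M f\Delta|\nabla u|^2$, and integrating $\int_M f\langle\nabla u,\nabla\Delta u\rangle$ by parts twice — using the pointwise identity $\langle\nabla u,\nabla\langle\nabla f,\nabla u\rangle\rangle = \nabla^2 f(\nabla u,\nabla u) + \tfrac12\langle\nabla f,\nabla|\nabla u|^2\rangle$ — one arrives after collecting terms at
\begin{align*}
\int_M f\bigl((\Delta u)^2 - |\nabla^2 u|^2 - Ric(\nabla u,\nabla u)\bigr)\,dvol_M
&= \int_M\bigl(\nabla^2 f - (\Delta f)g\bigr)(\nabla u,\nabla u)\,dvol_M \\
&\quad + \int_\Sigma\Bigl(f(\Delta u)\langle\nabla u,\xi\rangle - \tfrac12 f\langle\nabla|\nabla u|^2,\xi\rangle + |\nabla u|^2\langle\nabla f,\xi\rangle - \langle\nabla u,\xi\rangle\langle\nabla f,\nabla u\rangle\Bigr)\,dvol_\Sigma.
\end{align*}
Now I would invoke $u|_\Sigma = u_0$: on $\Sigma$ the last two boundary terms cancel, $\tfrac12\langle\nabla|\nabla u|^2,\xi\rangle = \nabla^2 u(\nabla u,\xi) = \langle\nabla u,\xi\rangle\nabla^2 u(\xi,\xi)$, and the hypersurface decomposition $\Delta u = \Delta_\Sigma u + \nabla^2 u(\xi,\xi) + (\tr S_\xi)\langle\nabla u,\xi\rangle$ together with $\Delta_\Sigma u = 0$ and $\tr S_\xi = (n-1)\langle\eta,\xi\rangle$ collapse the boundary integrand to $(n-1)f\langle\eta,\xi\rangle\langle\nabla u,\xi\rangle^2$. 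This is precisely the asserted identity in the case $\lambda = 0$.

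\emph{Stage 2 (general $\lambda$).} I would start from the purely algebraic identity
$$(\Delta u + n\lambda u)^2 - |\nabla^2 u + \lambda u g|^2 = (\Delta u)^2 - |\nabla^2 u|^2 + 2(n-1)\lambda u\Delta u + n(n-1)\lambda^2 u^2,$$
multiply by $f$, and integrate. The only genuinely new interior term is $\int_M 2(n-1)\lambda f u\Delta u$; integrating it by parts (again using $u|_\Sigma = u_0$) produces the interior contribution $(n-1)\lambda\int_M u^2\Delta f - 2(n-1)\lambda\int_M f|\nabla u|^2$ and the boundary contribution $-(n-1)\lambda u_0^2\int_\Sigma\langle\nabla f,\xi\rangle + 2(n-1)\lambda u_0\int_\Sigma f\langle\nabla u,\xi\rangle$. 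Substituting the Stage 1 identity and comparing with the right-hand side claimed in the theorem, one checks that every interior integral cancels in pairs — the $\nabla^2 f(\nabla u,\nabla u)$ terms, the $(\Delta f)|\nabla u|^2$ terms, the $(\Delta f)u^2$ terms, the $f|\nabla u|^2$ terms, and the $\lambda^2 f u^2$ terms — so that what remains is exactly $(n-1)\int_\Sigma\bigl(f\langle\eta,\xi\rangle\langle\nabla u,\xi\rangle^2 + 2\lambda f\langle\nabla u,\xi\rangle u_0 - \lambda u_0^2\langle\nabla f,\xi\rangle\bigr)\,dvol_\Sigma$, the left-hand side of the theorem.

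\emph{Main obstacle.} Two points require care. The first is bookkeeping: the repeated integration by parts generates many boundary integrals, and the conclusion rests on an exact cancellation of interior terms; splitting the argument into the two stages above — rather than doing it all at once — keeps this under control. The second, more genuinely delicate, is the regularity hypothesis. Bochner's formula and the integrations by parts above are valid for $f\in C^3(M)$, whereas the theorem only assumes $f$ to be a.e. twice differentiable, and the function $f=\cosh d(\cdot,p)$ for which it will be applied is smooth only off the measure-zero set $\{p\}\cup\mathrm{Cut}(p)$. I would therefore establish the identity for smooth $f$ first and then pass to the general case by the approximation argument of \cite{QX14}, exploiting the semiconcavity of the distance function to control the singular locus; this limiting step is where the real work lies.
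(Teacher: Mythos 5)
The paper does not actually prove this statement: it is quoted verbatim (specialized to constant boundary values) from Qiu--Xia \cite{QX14}, so there is no internal proof to compare against. Your derivation is correct and essentially reconstructs the original argument: the Stage 1 identity is the classical weighted Reilly formula obtained by multiplying Bochner's formula by $f$ and integrating by parts, your boundary simplification under $u\vert_\Sigma=u_0$ (cancellation of the $|\nabla u|^2\langle\nabla f,\xi\rangle$ and $\langle\nabla f,\nabla u\rangle\langle\nabla u,\xi\rangle$ terms, $\tfrac12\langle\nabla|\nabla u|^2,\xi\rangle=\langle\nabla u,\xi\rangle\nabla^2u(\xi,\xi)$, and $\Delta u=\nabla^2u(\xi,\xi)+(n-1)\langle\eta,\xi\rangle\langle\nabla u,\xi\rangle$ with the paper's convention $\langle\eta,\xi\rangle=\tr(S_\xi)/(n-1)$) is right, and the Stage 2 algebra $(\Delta u+n\lambda u)^2-|\nabla^2u+\lambda ug|^2=(\Delta u)^2-|\nabla^2u|^2+2(n-1)\lambda u\Delta u+n(n-1)\lambda^2u^2$ together with one integration by parts of $\int_M fu\Delta u$ does produce exactly the claimed cancellations; I verified the bookkeeping and the identity closes. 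The one genuine caveat is the one you flag yourself: the hypothesis ``$f$ a.e.\ twice differentiable'' is, as literally stated, too weak to justify the divergence theorem, and your argument is complete only for $f\in C^2$ (or $C^3$). The extension to the intended weight $f=\cosh r$, which is merely Lipschitz and smooth off the cut locus, requires the approximation/semiconcavity argument that you defer to \cite{QX14}; since the present paper likewise takes the formula on faith from that source, this deferral is acceptable, but be aware that this limiting step is a real part of the proof and not mere routine, because one must check that the singular (distributional) part of $\nabla^2 f$ on the cut locus enters with a favorable sign in the inequality actually used in Theorem \ref{thm:hkinequality}.
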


\begin{proof}[Proof of Theorem \ref{thm:hkinequality}]
First observe that
$$\nabla^2 f =\cosh(r) dr^2 +  \sinh(r) \nabla^2 r.$$
For any $x$ outside the cut locus of $p$ the Hessian $\nabla^2 f$ thus has an orthonormal frame of eigenvectors $\{e_1,...,e_{n-1},\nabla r\}$ at $x$ with dual frame $\{\theta^1,...,\theta^{n-1},dr\}$ so that
$$\nabla^2 f  = \cosh(r)dr^2 + \sinh(r)\sum_{i=1}^{n-1} \kappa_i(\theta^i)^2$$
where $\kappa_i=\nabla^2 r(e_i,e_i)$.  It follows that
$$(\Delta f) g - \nabla^2 f  = \sinh(r)(\Delta r)dr^2 + \sum_{i=1}^{n-1}\Big(\cosh(r) + \sinh(r)\sum_{j\neq i} \kappa_j\Big)(\theta^i)^2.$$
Using the assumption $Ric_{n-2}\geq -(n-2)$, the Hessian comparison theorem above then implies that $\Delta r \leq (n-1) \cosh(r)/\sinh(r)$ and $\sum_{j\neq i} \kappa_j\leq (n-2)\cosh(r)/\sinh(r)$ and hence
\begin{equation}\label{eq:substaticpotential}
(\Delta f) g - \nabla^2 f \leq (n-1)fg.
\end{equation}
The rest of the proof is the same as that in \cite{QX14}, but we include it for completeness.  Put $\lambda = -1$ and let $u$ be the solution to the Dirichlet boundary value problem
\begin{align*}
\left\{
\begin{array}{l r}
\Delta u = nu  \\
u\vert_\Sigma = c>0 
\end{array}\right.
\end{align*}
From equation \eqref{eq:substaticpotential} it follows that
$$\left(\Delta f g - \nabla^2 f - fRic - 2(n-1) f g\right)(\nabla u,\nabla u) \leq 0$$
and
$$(n-1) (\Delta f - n  f)u^2\leq 0$$
and hence the formula of Theorem \ref{thm:reilly} gives
\begin{equation}\label{eq:reilly}
\int_\Sigma f \langle \eta,\xi\rangle \langle \nabla u, \xi\rangle ^2  \leq \int_\Sigma \left( 2f \langle \nabla u, \xi\rangle c-  \langle \nabla f , \xi\rangle c^2  \right) .
\end{equation}
Now, using H\"older's inequality followed by the previous inequality we get
\begin{align*}
\left(  \int_\Sigma  f \langle \nabla u,\xi\rangle \right)^2 &\leq \int_\Sigma \frac{f}{\langle \eta,\xi\rangle}\int_\Sigma f\langle \eta,\xi\rangle\langle \nabla u, \xi\rangle^2  \\
&\leq \int_\Sigma \frac{f}{\langle \eta,\xi\rangle}\int_\Sigma \left( 2f \langle \nabla u, \xi\rangle c-  \langle \nabla f , \xi\rangle c^2  \right) .
\end{align*}
It follows that
$$\left(\int_\Sigma f\langle \nabla u , \xi\rangle  - c\int_\Sigma \frac{f}{\langle \eta,\xi\rangle} \right)^2 - c^2\left(\int_\Sigma \frac{f}{\langle \eta,\xi\rangle} \right)^2 \leq -c^2 \int_\Sigma \frac{f}{\langle \eta,\xi\rangle} \int_\Sigma \langle \nabla f,\xi\rangle. $$
Dropping the first term (since it's nonnegative) and noting that $c\neq 0$ we obtain
$$\int_\Sigma\frac{f}{\langle \eta,\xi\rangle} \geq \int_\Sigma \langle \nabla f , \xi\rangle =\int_M \Delta f$$
as desired.  If equality holds than equality must hold in equation \ref{eq:reilly} and hence $(\Delta u - n u)^2 = |\nabla^2 u - u g|^2$.  By construction $\Delta u = nu$ and hence in the equality case we have $\nabla^2 u = ug$ in $M$.  Since $u\vert_\Sigma=c$ it follows from an Obata type rigidity result that $M$ must be a geodesic ball in a space form of constant curvature $-1$ (see e.g. \cite[Theorem 5.1]{WY14}).
\end{proof}

\section{Volume estimates using integral curvature bounds}\label{sec:integralestimate}

We now give the proof of Theorem \ref{thm:integralestimate}.  In particular, throughout this section we assume $0<m<n-1$.  In the proof of Theorem \ref{thm:pointwiseestimate} we assumed a pointwise lower curvature bound and used the partial traces of the Riccati equation \eqref{eq:riccati} to bound the mean curvature $h$ explicitly and thus bound the logarithmic growth of the volume density $\mathcal A$.  Unfortunately, in order to obtain a bound depending only on integrals of the curvature this approach fails since one cannot use the comparison theory for the Riccati differential equation.

As before, we fix $\xi\in \hat \nu$ and put $\gamma(t)=\exp_\nu(t\xi)$.  Differentiating eq. \eqref{eq:polardensity} gives
\begin{equation}\label{eq:densityeq}
\mathcal A''=(h'+h^2)\mathcal A.
\end{equation}
Taking the trace of equation \eqref{eq:riccati} gives $h'+\tr(S^2)=-Ric(\dot\gamma,\dot\gamma)$ which leaves us to control the second order invariant $\tr(S)^2 - \tr(S^2)$ in \eqref{eq:densityeq} in terms of curvature.  This motivates us \cite{G88,Y92} to consider in place of $\mathcal A$ the function $A$ with $\mathcal A=A^{n-1}$ which satisfies
$$A''=\frac{1}{n-1}\left(h'+\frac{h^2}{n-1}\right)A\leq -\frac{Ric(\dot\gamma,\dot\gamma)}{n-1}A$$
allowing us to control the second order invariant of $S$ using the Cauchy-Schwarz inequality.  However, if $0<m<n-1$ then the initial conditions for the function $A$ are unusable, namely $A(0)=0$ and $A'(0)=\infty$.  

The remarkable observation of \cite{PSW97} was that one can control certain products of eigenvalues of $S$ directly in terms of integrals of sectional curvature without relying on the Cauchy-Schwarz inequality, provided one of the eigenvalues vanishes at $\Sigma$.  For $m=1$, putting $\mathcal A=A^{n-2}$ (so that $A'(0)=1$) and assuming $\Sigma$ is a geodesic then allows control of the second order invariant of $S$.  However, generalizing this directly to higher dimensional submanifolds by setting $\mathcal A=A^{n-m-1}$ then yields an estimate only when $\Sigma$ is totally geodesic.

Instead, motivated by the pointwise comparison of the previous section, we decompose the mean curvature as $h=\phi + \psi$ as in the proof of Theorem \ref{thm:pointwiseestimate}, and then decompose the polar volume density $\mathcal A$ into two functions $\mathcal A(t)=\mathcal J(t)\mathcal Y(t)$ where $\mathcal J(t)$ is defined by the equation
\begin{equation*}
\left\{
\begin{array}{l}	
\mathcal J' = \phi \mathcal J, \\
\mathcal J(0)=1
\end{array}\right.
\end{equation*}
from which it follows that $\mathcal Y$ satisfies $\mathcal Y' = \psi \mathcal Y$.
Putting $\mathcal J= J^m$ and $\mathcal Y = Y^{n-m-1}$ we have $\mathcal A = J^m Y^{n-m-1}$ with
\begin{equation}\label{eq:Jineq}
J'' = \frac{1}{m}\left(\phi' + \frac{\phi^2}{m}\right) J \leq -\frac{Ric_m(\dot\gamma,\mathcal H_t)}{m}J
\end{equation}
and
\begin{equation}\label{eq:Yineq}
Y'' = \frac{1}{n-m-1}\left(\psi' + \frac{\psi^2}{n-m-1}\right) Y \leq -\frac{Ric_{n-m-1}(\dot\gamma,\mathcal V_t)}{n-m-1}Y.
\end{equation}
The initial conditions for $J$ and $Y$ are easily found to be
$$\left\{
\begin{array}{l}
J(0)=1, \\ 
J'(0) = \langle\eta,\xi\rangle,
\end{array}\right. \;\;\;\;\;
\left\{\begin{array}{l} Y(0)=0, \\ Y'(0)=1.
\end{array}\right.
$$

The main challenge introduced with this decomposition is that we only want to consider expressions involving curvature multiplied by the full volume density $\mathcal A$, rather than curvature multiplied by just the function $J$ or $Y$ as in \eqref{eq:Jineq} and \eqref{eq:Yineq}.  With this in mind, rather than integrating the two inequalities directly these considerations motivate the following lemma.
\begin{lemma}\label{lem:lemma1}
If $0<m<n-1$ the functions $J$ and $Y$ defined above satisfy
\small
\begin{equation}\label{eq:lemma1}
J'(t)Y^{\frac{n-m-1}{m}}(t) \leq \int_0^t(\rho_m)_\m \mathcal A^{1/m}ds + \frac{1}{m^2}\int_0^t(\phi_\p\psi_\p)\mathcal A^{1/m}ds	
\end{equation}
and
\small
\begin{equation*}\label{eq:lemma2}
Y'(t)J^{\frac{m}{n-m-1}}(t)\leq 1 + \int_0^t (\rho_{n-m-1})_\m \mathcal A^{\frac{1}{n-m-1}}ds + \frac{1}{(n-m-1)^2}\int_0^t (\phi_\p\psi_\p)\mathcal A^{\frac{1}{n-m-1}}ds.
\end{equation*}
\normalsize
\end{lemma}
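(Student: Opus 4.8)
The plan is to reduce each inequality to a first-order comparison for a single scalar function whose sign I can control. Consider the first inequality. Since $\mathcal A = J^m Y^{n-m-1}$ with $J,Y>0$ up to the focal distance and $J' = \tfrac{\phi}{m}J$, the left-hand side is nothing but $F(t):=J'(t)Y^{\frac{n-m-1}{m}}(t) = \tfrac1m\,\phi(t)\,\mathcal A(t)^{1/m}$. Differentiating and using $\mathcal A' = h\mathcal A$ from \eqref{eq:polardensity} together with $h=\phi+\psi$ gives $F' = \tfrac1m\bigl(\phi' + \tfrac{\phi^2}{m} + \tfrac{\phi\psi}{m}\bigr)\mathcal A^{1/m}$. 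The partial trace of the Riccati equation \eqref{eq:riccati} on the parallel subspace $\mathcal H_t$, followed by the Cauchy--Schwarz estimate used to derive \eqref{eq:Jineq}, yields $\phi' + \tfrac{\phi^2}{m}\le -Ric_m(\dot\gamma,\mathcal H_t)\le(\rho_m)_\m$ (the last step by definition of $\rho_m$ as a pointwise minimum); since $m\ge 1$ this gives
\[
F'(t)\ \le\ (\rho_m)_\m\,\mathcal A^{1/m}\ +\ \frac{\phi\psi}{m^2}\,\mathcal A^{1/m}.
\]

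The delicate point is to replace $\phi\psi$ by $\phi_\p\psi_\p$ in this bound, and one cannot do so pointwise: when $\phi$ and $\psi$ are both negative, $\phi\psi>0=\phi_\p\psi_\p$. What saves the argument is that $F=\tfrac1m\phi\,\mathcal A^{1/m}$ has the same sign as $\phi$. Hence wherever $\phi<0$ we have $F<0$, which lies below the (nonnegative) right-hand side of \eqref{eq:lemma1} for free; and wherever $\phi\ge 0$ we have $\phi_\p=\phi$ and $\phi\psi\le\phi\psi_\p=\phi_\p\psi_\p$, so there $F'$ is bounded by the derivative of the right-hand side of \eqref{eq:lemma1}. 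A first-hitting-time argument then closes the loop: if \eqref{eq:lemma1} failed at some $t_1$, then at the last time $t_0<t_1$ at which equality held one would have $\phi\ge 0$ throughout $(t_0,t_1]$, and integrating the derivative inequality over $(t_0,t_1]$ contradicts the failure at $t_1$. (Equivalently, $\max(F,0)$ is absolutely continuous with a.e. derivative at most that of the right-hand side.) The initial conditions match because $\mathcal A(t,\xi)\sim t^{n-m-1}$ with $n-m-1\ge 1$ forces $\mathcal A^{1/m}(0)=0$, hence $F(0^+)=0$, which equals the right-hand side of \eqref{eq:lemma1} at $t=0$.

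The second inequality is proved identically after interchanging $(J,\phi,m,\mathcal H_t)$ with $(Y,\psi,n-m-1,\mathcal V_t)$: the left-hand side becomes $Y'J^{\frac{m}{n-m-1}} = \tfrac{1}{n-m-1}\,\psi\,\mathcal A^{1/(n-m-1)}$, the relevant curvature estimate is \eqref{eq:Yineq}, one uses $n-m-1\ge 1$, and the sign dichotomy is now driven by $\psi$ (the right-hand side being $\ge 1>0$, the inequality is immediate wherever $\psi<0$, while $\psi\phi\le\psi\phi_\p=\psi_\p\phi_\p$ wherever $\psi\ge0$). The only genuine difference is the initial condition: the Taylor expansion \eqref{eq:taylor} gives $\psi = \tr_{\mathcal V_t}(S)\sim\tfrac{n-m-1}{t}$ while $\mathcal A^{1/(n-m-1)}\sim t$ as $t\to0$, so $\tfrac{1}{n-m-1}\,\psi\,\mathcal A^{1/(n-m-1)}\to1$, which accounts for the additive constant on the right-hand side. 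I expect the $\phi\psi$-versus-$\phi_\p\psi_\p$ discrepancy to be the only real obstacle; everything else is a direct computation from \eqref{eq:riccati}, \eqref{eq:polardensity}, Lemma \ref{lem:hessiancomparison}, and the initial-value data of Section \ref{sec:setup2}.
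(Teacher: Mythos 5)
Your proof is correct and follows essentially the same route as the paper: you differentiate the quantity $J'Y^{(n-m-1)/m}$ (which you rewrite, equivalently, as $\tfrac1m\phi\,\mathcal A^{1/m}$), invoke the Riccati-trace bound \eqref{eq:Jineq} to control the second-order part, and handle the replacement of $\phi\psi$ by $\phi_\p\psi_\p$ via the same sign/interval observation (the paper phrases it as ``restrict to a maximal interval $[t_0,t]$ where $J'\ge 0$ starting from a zero of $J'Y^\delta$,'' you phrase it as a first-hitting-time contradiction, but these are the same argument). The identification of the initial data, including $Y'(0)J^{m/(n-m-1)}(0)=1$ producing the extra $+1$ in the second inequality, also matches the paper exactly.
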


\begin{proof}
For the first inequality, for any $\delta>0$ we have
$$(J'Y^\delta)' = J''Y^\delta + \delta Y^{\delta - 1} J'Y'.$$
Using the identities $J'=(\phi/m)J$ and $Y'= [\psi/(n-m-1)]Y$ together with eqs. \eqref{eq:Jineq} and \eqref{eq:Yineq} we get
$$(J'Y^\delta)'\leq \left(-\rho_m + \delta \frac{\phi\psi}{m(n-m-1)}\right)JY^\delta.$$
Now, if $J'(t)\leq 0$ the inequality \eqref{eq:lemma1} holds automatically so we only need to show the inequality for all values of $t$ such that $J'(t)> 0$.  Moreover, since $J'(0)Y^\delta(0)=0$ it follows that all such values of $t$ are contained in an interval $[t_0,t]$ such that $J'(t_0)Y^\delta(t_0)=0$ and $J'\geq 0$ on $[t_0,t]$.  On such an interval, $J'\geq 0$ implies $\phi\geq 0$ and hence $\phi\psi\leq\phi_\p\psi_\p$ on $[t_0,t]$.  Using also that $-\rho_m\leq (\rho_m)_\m$ and integrating over the interval $[t_0,t]$ gives
$$J'(t)Y^\delta(t)\leq \int_{t_0}^t (\rho_m)_\m JY^\delta + \delta\int_{t_0}^t \frac{\phi_\p\psi_\p}{m(n-m-1)}JY^\delta.$$  Since the integrands are nonnegative, we can replace the lower bound $t_0$ with $0$ and preserve the inequality.  Finally, taking $\delta=(n-m-1)/m$ gives the result \eqref{eq:lemma1}.

Analogous reasoning leads to the second inequality, except that the initial condition $Y'(0)J^\delta(0)=1  $ leads to the extra term on the right hand side of the inequality.
\end{proof}

Based on this lemma, one now only needs to control the product $\phi_\p\psi_\p$ in terms of curvature.  We prove that this is possible provided $\phi_\p$ vanishes at $\Sigma$, generalizing the eigenvalue estimate in \cite{PSW97}.

\begin{lemma}\label{lem:2meanlemma}
Let $\phi,\psi,$ and $\mathcal A$ be as above.  Put $k=\min\{m,n-m-1\}$.  If $\phi_\p(t)\psi_\p(t)$ is bounded as $t\to 0$ then for any $p>n-k$,
$$\left(\int_0^t(\phi_\p\psi_\p)^p\mathcal A ds\right)^\frac{1}{p}\leq \frac{2p-1}{p-(n-k)}\left(\int_0^t (\rho_{k})_\m^p\mathcal A ds\right)^\frac{1}{p}.$$
\end{lemma}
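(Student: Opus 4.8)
The plan is to convert the $k$-Ricci hypothesis into one-dimensional differential inequalities for the radial factors $J$ and $Y$, to rescale so that all powers of the polar volume density match, and then to close the estimate by a weighted Hardy-type inequality whose threshold exponent is exactly $n-k$.

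First I would record the differential inequalities. Put $q=n-m-1$ and $\bar\rho(t)=(\rho_k)_\m(\gamma(t))$. A pointwise bound $Ric_k(\dot\gamma,\cdot)\ge k\lambda$ forces $Ric_j(\dot\gamma,\cdot)\ge j\lambda$ for every $j\ge k$ (average over the $k$-planes inside a $j$-plane); since $\dim\mathcal H_t=m\ge k$ and $\dim\mathcal V_t=q\ge k$, this together with \eqref{eq:Jineq}--\eqref{eq:Yineq} gives $J''\le\bar\rho J$ and $Y''\le\bar\rho Y$, where $J(0)=1$, $J'(0)=\langle\eta,\xi\rangle$, $Y(0)=0$, $Y'(0)=1$. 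The hypothesis that $\phi_\p\psi_\p$ remains bounded as $t\to0$ is used only to fix a sign: by \eqref{eq:taylor} and $\tr_{\mathcal V_0}S_\xi=0$ (the trivially extended Weingarten map kills $\mathcal V_0=\nu_x\cap\xi^\perp$) one has $Y(t)=t+O(t^3)$, so $\psi_\p(t)=q\,Y'_\p/Y=q/t+O(t)$ is unbounded, forcing $\phi_\p(t)=(m\langle\eta,\xi\rangle+O(t))_\p$ to vanish to first order; that is, $\langle\eta,\xi\rangle\le0$, equivalently $J'(0)\le0$.

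The crucial normalization is to set $\tilde J=J^{m/p}$ and $\tilde Y=Y^{q/p}$. Since $p>n-k>\max\{m,q\}$ the exponents lie in $(0,1)$, the negative terms $\tfrac mp(\tfrac mp-1)J^{m/p-2}(J')^2$ and its $Y$-analogue may be discarded, and one is left with $\tilde J''\le\tfrac mp\bar\rho\tilde J$, $\tilde Y''\le\tfrac qp\bar\rho\tilde Y$; moreover $\mathcal A=(\tilde J\tilde Y)^p$ and $\phi_\p\psi_\p\,\mathcal A^{1/p}=p^2\,\tilde J'_\p\tilde Y'_\p$, so the claim is equivalent to
$$\int_0^t\big(\tilde J'_\p\tilde Y'_\p\big)^p\,ds\ \le\ \Big(\tfrac{2p-1}{p^2\,(p-n+k)}\Big)^p\int_0^t\big(\bar\rho\,\tilde J\,\tilde Y\big)^p\,ds .$$
Differentiating the product, on the set where $\tilde J'>0$ and $\tilde Y'>0$ one has $(\tilde J'\tilde Y')'\le\tfrac1p\bar\rho(m\tilde J\tilde Y'+q\tilde J'\tilde Y)\le\tfrac{n-k-1}{p}\bar\rho(\tilde J\tilde Y)'$, and integrating from $0$ (the boundary term has a nonpositive limit, by the sign fixed above) gives $\tilde J'_\p(s)\tilde Y'_\p(s)\le\tfrac{n-k-1}{p}\int_0^s\bar\rho\,((\tilde J\tilde Y)')_\p\,dw$. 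Into this I would feed $\tilde J'_\p(w)\le\tfrac mp\int_0^w\bar\rho\tilde J$ (from $\tilde J'(0)\le0$) and the singular-part bound $\psi_\p(w)\le q/w+\tfrac{q}{w^2}\int_0^w v^2\bar\rho\,dv$ — obtained by subtracting the model $q/w$ from the Riccati inequality $\psi'+\psi^2/q\le q\bar\rho$ and integrating $(w^2(\psi-q/w))'\le qw^2\bar\rho$ — thereby reducing the right-hand side to an iterated integral of $\bar\rho$ against the weight $\tilde J\tilde Y=\mathcal A^{1/p}$. Since $\mathcal A(s)\sim s^{\,n-m-1}$ as $s\to0$, the outer integration is Hardy-type with critical exponent $p=n-k$: for $p>n-k$ it converges, and the Hardy constant $\tfrac{p}{p-(n-k)}$, multiplied by the factor $\tfrac{2p-1}{p}$ arising from adding the curvature contribution $\int_0^w\bar\rho\tilde J$ to the contribution of the inhomogeneity $Y'(0)=1$ (the ``$q/w$'' term) and by the $p^{-2}$ above, produces exactly the constant $\tfrac{2p-1}{p-(n-k)}$.

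The hard part will be this last estimate. The genuinely new feature compared with the codimension-one case of \cite[Lemma 3.1]{PSW97} is that $\Sigma$ is not assumed totally geodesic, so $\tilde J'_\p$ need not vanish at $\Sigma$ and the inhomogeneous term from $Y'(0)=1$ does not cancel; one therefore cannot bound $\tilde J'_\p$ and $\tilde Y'_\p$ separately and then multiply, since that would cost a quadratic rather than linear power of the curvature norm and spoil the estimate. The two radial factors must be kept coupled, the product estimated directly from the differential inequality above, and the Hölder and Hardy bookkeeping carried out carefully enough to retain the sharp linear dependence with the stated constant — precisely the point at which the one-dimensional simplifications of \cite{PSW97} are no longer available.
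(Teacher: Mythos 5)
Your route is genuinely different from the paper's, and you flag the decisive estimate as "the hard part" without actually carrying it out; that is exactly where the argument would fail to close in the form you describe. The paper does not normalize $J$ and $Y$ by $p$-th roots, nor does it invoke Hardy inequalities. Instead it introduces the symmetric pair
$$\sigma=\min\{\phi_\p/m,\ \psi_\p/(n-m-1)\},\qquad \tau=\max\{\phi_\p/m,\ \psi_\p/(n-m-1)\},$$
so that $\sigma\tau=\phi_\p\psi_\p/(m(n-m-1))$ and \emph{both} $\sigma$ and $\tau$ satisfy the same scalar Riccati inequality $f'+f^2\leq(\rho_k)_\m$ (using $(\rho_{n-k-1})_\m\leq(\rho_k)_\m$). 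It then multiplies the inequality for $\sigma$ by $(\sigma\tau)^{p-1}\mathcal A$, integrates, integrates the $\sigma'$-term by parts (the boundary term vanishes precisely because $\sigma\tau$ is bounded and $\mathcal A(0)=0$, which is the whole content of the boundedness hypothesis here), and uses the Riccati inequality again to control $\tau'$, together with $h=\phi+\psi\leq(n-k-1)(\sigma+\tau)$ to control $\mathcal A'$. The constant $\tfrac{2p-1}{p-(n-k)}$ then drops out of a single algebraic rearrangement followed by one H\"older inequality, with no Hardy-type weight analysis at all.

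In your plan, the asymmetry between $\tilde J$ (regular initial condition) and $\tilde Y$ (singular initial condition) forces you to split $\psi_\p$ into the model term $q/w$ plus a curvature correction and then to thread the result through an iterated Hardy estimate. There are two concrete problems. First, after substituting your bounds for $\tilde J'_\p$ and $\psi_\p$ into $\tilde J'_\p(s)\tilde Y'_\p(s)\leq\tfrac{n-k-1}{p}\int_0^s\bar\rho((\tilde J\tilde Y)')_\p$, the weight $\tilde J\tilde Y=\mathcal A^{1/p}$ appears inside the inner integral but your target has it multiplying $\bar\rho$ \emph{outside} after raising to the $p$-th power; $\mathcal A^{1/p}$ is only comparable to $s^{(n-m-1)/p}$ near $s=0$, not globally, so the Hardy inequality with weight $s^{n-m-1}$ you gesture at does not directly apply, and an honest version would have to carry the genuine weight $\mathcal A$ through, which is exactly what the paper's IBP does for you automatically. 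Second, you never track the constant: the assertion that the Hardy constant $\tfrac{p}{p-(n-k)}$, a factor $\tfrac{2p-1}{p}$, and the normalization $p^{-2}$ multiply to give $\tfrac{2p-1}{p-(n-k)}$ is stated without derivation, and it is not at all evident that two separate applications of Hardy (one for the inner $\tilde J'_\p$ bound, one for the outer integral) plus a triangle inequality to combine the $q/w$ model term with the curvature correction yield a \emph{linear} rather than quadratic appearance of $\norm{(\rho_k)_\m}_p$ with that exact constant --- indeed, you yourself observe that bounding the two factors separately and multiplying would produce a quadratic dependence, yet your substitution step appears to do precisely that. Your observation that boundedness of $\phi_\p\psi_\p$ at $t=0$ forces $\langle\eta,\xi\rangle\leq0$ is correct, but in the paper's proof this sign information is never needed; only the vanishing of the boundary term $\sigma^p\tau^{p-1}\mathcal A$ at $t=0$ is used. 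The takeaway is that the $(\sigma,\tau)$ min/max decomposition is the essential device: it symmetrizes the two radial traces so that a single IBP against the weight $(\sigma\tau)^{p-1}\mathcal A$ and one H\"older inequality close the estimate with the sharp constant, sidestepping the entire Hardy machinery.
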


The proof is given at the end of this section and is a straightforward modification of the proof in \cite{PSW97}.  Note that if the submanifold $\Sigma$ is minimal then it follows from equation \eqref{eq:taylor} that $\phi_\p\psi_\p$ is bounded as $t\to 0$.

Using these inequalities, we now consider the area of the equidistant hypersurfaces $v(t)=\vol(\Sigma_t)$ given by the integral
\begin{equation}\label{eq:areaintegral}
v(t)=\int_{\hat\nu}J^m(t,\xi)Y^{n-m-1}(t,\xi) d\xi.
\end{equation}
The volume $V(r)=\vol(T(\Sigma,r))$ can then be written
$$V(r)=\int_0^r v(t)dt.$$     We wish to differentiate $v(t)$ via the expression \eqref{eq:areaintegral}.  Note that the integrand is smooth and nonnegative on the open set $U$ defined in Section \ref{sec:prelim} and vanishes on $\nu\setminus U$, but may be discontinuous on the boundary of $U$.  However, since $U$ is star-shaped with respect to the zero section of $\nu$ (i.e. $u\in U$ implies $\lambda u \in U$ for $0\leq \lambda\leq 1$) it follows that $v$ is an almost everywhere differentiable lower semi-continuous function (see \cite{A07}) and
\begin{align*}
v'(t)&\leq \int_{\hat\nu} mJ^{m-1}Y^{n-m-1}J' + (n-m-1)J^mY^{n-m-2}Y'd\xi.
\end{align*}
We now substitute the two inequalities of Lemma \ref{lem:lemma1} and use two applications of H\"older's inequality.  For example, the first term satisfies
\small
\begin{align*}
m&\int_{\hat\nu} J^{m-1}Y^{n-m-1}J'd\xi \leq m\int_{\hat\nu} \mathcal A^\frac{m-1}{m}\left(\int_0^t(\rho_m)_\m\mathcal A^\frac{1}{m}ds + \frac{1}{m^2}\int_0^t (\phi_\p\psi_\p)\mathcal A^\frac{1}{m}ds\right)d\xi \\
&\leq mv(t)^\frac{m-1}{m}\left[\left(\int_{\hat\nu} \Big(\int_0^t (\rho_m)_\m\mathcal A^\frac{1}{m}ds\Big)^m d\xi\right)^\frac{1}{m} + \frac{1}{m^2}\left(\int_{\hat\nu}\Big(\int_0^t (\phi_\p\psi_\p)\mathcal A^\frac{1}{m}ds\Big)^m d\xi\right)^\frac{1}{m}\right] \\
&\leq mv(t)^\frac{m-1}{m}t^\frac{m-1}{m}\left[\left(\int_{\hat\nu}\int_0^t (\rho_m)_\m^m \mathcal A dsd\xi\right)^\frac{1}{m} + \frac{1}{m^2}\left(\int_{\hat\nu}\int_0^t (\phi_\p\psi_\p)^m\mathcal Adsd\xi\right)^\frac{1}{m}\right].
\end{align*}
\normalsize
Handling the second term of the integral in a similar fashion one easily checks that
\small
\begin{equation}
\begin{split}\label{eq:areaderivative}
v'(t&) \leq (n-m-1)\vol(\hat\nu)^\frac{1}{n-m-1}v(t)^{\frac{n-m-2}{n-m-1}} \\
&+ \left[(n-m-1)\norm{(\rho_{n-m-1})_\m}_{{n-m-1},t} + \tfrac{1}{n-m-1}\norm{\phi_\p\psi_\p}_{{n-m-1},t} \right](tv(t))^{\frac{n-m-2}{n-m-1}} \\
&+ \left[m\norm{(\rho_m)_\m}_{m,t} + \tfrac{1}{m}\norm{\phi_\p\psi_\p}_{m,t} \right](tv(t))^{\frac{m-1}{m}}
\end{split}
\end{equation}
\normalsize
where $\norm{\cdot}_{p,t}$ is the usual $L^p$ norm on the tube $T(\Sigma,t)$.  In order to make use of the estimate in Lemma \ref{lem:2meanlemma} it is necessary to raise the exponents in the expression above at the cost of a volume term via the inequality
\begin{equation}\label{eq:holder}
\norm{f}_{q,t}\leq \norm{f}_{p,t}V(t)^{\frac{1}{q}-\frac{1}{p}}
\end{equation}
provided $p\geq q\geq 1$.

Henceforth, set $k=\min\{m,n-m-1\}$ and note that $\rho_{k}\leq{\rho_{n-k-1}}$.  Using the inequality \eqref{eq:holder} together with Lemma \ref{lem:2meanlemma}, we have for any $p>n-k\geq q$
$$\norm{\phi_\p\psi_\p}_{q,t}
\leq \frac{2p-1}{p-(n-k)}V(t)^{\frac{1}{q}-\frac{1}{p}}\norm{(\rho_k)_\m}_{p,t}.$$
Applying these observations to \eqref{eq:areaderivative} we obtain
\begin{align*}
V''(t)&\leq (n-m-1)\vol(\hat\nu)^\frac{1}{n-m-1}V'(t)^\frac{n-m-2}{n-m-1} \\
&\;\;\;\;\;+ \norm{(\rho_k)_\m}_{p,t}\left(C_1 V(t)^{\frac{1}{m}-\frac{1}{p}}(tV'(t))^\frac{m-1}{m}
+ C_2V(t)^{\frac{1}{n-m-1}-\frac{1}{p}}(tV'(t))^\frac{n-m-2}{n-m-1}\right)
\end{align*}
where 
\begin{align*}
C_1&=m+\frac{2p-1}{m(p-n+k)}, \\
C_2&=(n-m-1)+\frac{2p-1}{(n-m-1)(p-n+k)}.
\end{align*}
In order to obtain an inequality which depends more generally on $\norm{(\rho_k-H)_\m}$ we observe that for $H\leq 0$,
$$(\rho_k)_\m \leq (\rho_k - H)_\m + |H|$$
and hence $(\rho_k)_\m^p  \leq 2^{p-1}\left((\rho_k - H)_\m^p + |H|^p\right).$  It then follows that
$$\norm{(\rho_k)_\m}_{p,t} \leq 2^\frac{p-1}{p}\left( \norm{(\rho_k - H)_\m}_{p,t} + |H|V(t)^\frac{1}{p} \right).$$  Substituting this back into the inequality above and estimating $2^\frac{p-1}{p}< 2$ we obtain
\small
\begin{align*}
V''(t)&\leq (n-m-1)\vol(\hat\nu)^\frac{1}{n-m-1}V'(t)^\frac{n-m-2}{n-m-1} \\
&\quad + 2\norm{(\rho_k-H)_\m}_{p}\left(C_1 V(t)^{\frac{1}{m}-\frac{1}{p}}(tV'(t))^\frac{m-1}{m}
+ C_2V(t)^{\frac{1}{n-m-1}-\frac{1}{p}}(tV'(t))^\frac{n-m-2}{n-m-1}\right) \\
&\quad + 2|H|\left(C_1 V(t)^{\frac{1}{m}}(tV'(t))^\frac{m-1}{m}
+ C_2V(t)^{\frac{1}{n-m-1}}(tV'(t))^\frac{n-m-2}{n-m-1}\right).
\end{align*}
\normalsize
It remains to use this differential inequality to obtain an estimate for $V(t)$.  To simplify notation we introduce the constants 
\begin{align*}
a &= (n-m-1)(\vol(\mathbb S^{n-m-1})\vol(\Sigma))^\frac{1}{n-m-1}, \\
b &= \norm{(\rho_k-H)_\m}_p, \\
c&=2(n-k-1) + \frac{2}{k}\left(\frac{2p-1}{p-n+k}\right).
\end{align*}
Noting that $\vol(\hat\nu) = \vol(\mathbb S^{n-m-1})\vol(\Sigma)$ the previous inequality then implies
\begin{align*}
V''(t)&\leq a (V')^{1-\frac{1}{n-m-1}} + cb\left(V^{\frac{1}{m}-\frac{1}{p}}(tV')^{1-\frac{1}{m}} + V^{\frac{1}{n-m-1}-\frac{1}{p}}(tV')^{1-\frac{1}{n-m-1}}\right) \\
&\quad+c|H|\left( V(t)^{\frac{1}{m}}(tV'(t))^{1-\frac{1}{m}} + V(t)^{\frac{1}{n-m-1}}(tV'(t))^{1-\frac{1}{n-m-1}}\right).
\end{align*}
Multiplying through by the nonnegative quantity $(V')^{\frac{1}{n-k-1}}$ and putting 
\begin{align*}
\delta_1 = \frac{1}{n-m-1}-\frac{1}{n-k-1},
\qquad\delta_2 =\frac{1}{k}-\frac{1}{n-k-1},
\qquad\delta_3 = \frac{1}{n-k-1} - \frac{1}{p},
\end{align*}
and $\alpha = (n-k-1)/(n-k)$ gives
\begin{align}
\begin{split}\label{eq:secondorderineq}
V''(t)(V')^\frac{1}{n-k-1}&\leq 
a(V')^{1-\delta_1}+cb t^\frac{k-1}{k}V^{\delta_2+\delta_3}(V')^{1-\delta_2} + \tfrac{cb}{{1+\delta_3}} t^\frac{n-k-2}{n-k-1}(V^{1+\delta_3})' \\
&\quad + c|H| t^\frac{k-1}{k}V^{\frac{1}{k}}(V')^{1-\delta_2} + \alpha c|H| t^\frac{n-k-2}{n-k-1}(V^{1/\alpha})'
\end{split}
\end{align}
We now integrate both sides from $0$ to $t$ as follows.  Noting that $0\leq \delta_1,\delta_2,\delta_3 <1$ and using H\"older's inequality we get the inequalities
\begin{align*}
\int_0^t (V')^{1-\delta_1} ds &\leq t^{\delta_1}\left(\int_0^t V' ds\right)^{1-\delta_1}=t^{\delta_1} V(t)^{1-\delta_1}
\end{align*}
and
\begin{align*}
\int_0^t s^\frac{k-1}{k} V^{\delta_2+\delta_3} (V')^{1-\delta_2} ds &\leq t^\frac{k-1}{k}\int_0^t V^{\delta_2+\delta_3}(V')^{1-\delta_2}ds \\
&\leq t^{\frac{k-1}{k}}t^{\delta_2}\left(\int_0^t V^{(\delta_2+\delta_3)/(1-\delta_2)}(V')ds\right)^{1-\delta_2} \\
&= t^\frac{n-k-2}{n-k-1}\left(\frac{1-\delta_2}{1+\delta_3}\right)^{1-\delta_2} V^{1+\delta_3}.
\end{align*}
Handling the integral of the fourth term on the right hand side of equation \eqref{eq:secondorderineq} in the same manner, we integrate equation \eqref{eq:secondorderineq} from $0$ to $t$ to obtain
\begin{align*}
(V')^{1/\alpha}&\leq \frac{1}{\alpha}\left(a t^{\delta_1}V^{1-\delta_1}+cb\left[\left(\tfrac{1-\delta_2}{{1+\delta_3}}\right)^{1-\delta_2} + \tfrac{1}{{1+\delta_3}}\right]t^\frac{n-k-2}{n-k-1}V^{1+\delta_3}\right)\\
&\quad + \frac{c|H|}{\alpha}\left(\left[\left(1-\tfrac{\alpha}{k}\right)^{1-\delta_2} + \alpha \right]t^\frac{n-k-2}{n-k-1}V^{1/\alpha} \right).
\end{align*}
Noting that both quantities in brackets are bounded above by $2$ and since $0<\alpha<1$ the inequality $(x+y)^\alpha\leq x^\alpha+y^\alpha$ for $x,y\geq 0$ implies\begin{equation*}
V'\leq \alpha^{-\alpha}\left(a t^{\delta_1}V^{1-\delta_1} + 2cb t^{\frac{n-k-2}{n-k-1}} V^{1+\delta_3}\right)^\alpha + \left(2c|H|/\alpha\right)^\alpha t^{2\alpha-1} V
\end{equation*}
Multiplying by the integrating factor $\mu(t)=e^{-\kappa t^{2\alpha}}$ where $\kappa = (2c|H|/\alpha)^\alpha/(2\alpha)$ transforms this inequality into
\begin{equation*}
(\mu V)' \leq \alpha^{-\alpha} \left(a t^{\delta_1}V^{1-\delta_1} + 2cb t^{\frac{n-k-2}{n-k-1}} V^{1+\delta_3}\right)^\alpha \mu
\end{equation*}
and using the fact that $0<\mu\leq 1$ for $t\geq 0$ we can write
\begin{equation}\label{eq:firstorderineq}
(\mu V)' \leq \alpha^{-\alpha} \left(a t^{\delta_1}(\mu V)^{1-\delta_1} + 2cb t^{\frac{n-k-2}{n-k-1}} (\mu V)^{1+\delta_3}\right)^\alpha.
\end{equation}
Put $r_0=\inf \{ r : b\mu(r)V(r)\geq 1\}$ with $r_0=\infty$ if $b\mu(r)V(r)<1$ for all $r>0$.  Define the function $f:[0,\infty)\to \mathbb R$ by
\begin{equation*}
f(t)=
\begin{cases}
\mu(t)V(t) & \textup{ if } t\leq r_0 \\
\max\{\mu(t)V(t),1/b\} & \textup{ if } t>r_0
\end{cases}
\end{equation*}
and observe that $f$ is absolutely continuous and satisfies the differential inequality \eqref{eq:firstorderineq} with $f$ in place of $\mu V$.  We now use this inequality to derive an upper bound for the function $f(t)$.  
To integrate this inequality, first notice that the exponents satisfy $1-\delta_2 < 1+\delta_3$.  On the interval $[0,r_0]$, since $bf\leq 1$ we thus have
\begin{flalign*}
&&f' \leq \alpha^{-\alpha}\left(a t^{\delta_1} + 2cb^{1-\delta_1-\delta_3} t^{\frac{n-k-2}{n-k-1}}  \right)^\alpha f^{1-\frac{\alpha}{n-m-1}}, &&(t\leq r_0).
\end{flalign*}
Putting $\beta=\delta_1+\delta_3$ it follows that for $0\leq t \leq r_0$ there holds
\begin{flalign*}
&&\left(f^\frac{\alpha}{n-m-1}\right)' \leq \tfrac{1}{n-m-1}\alpha^{1-\alpha}\left(a t^{\delta_1} + 2cb^{1-\beta} t^{\frac{n-k-2}{n-k-1}}\right)^\alpha, && (t\leq r_0).
\end{flalign*}
Integrating the right hand side from $0$ to $r\leq r_0$ using H\"older's inequality we find
\begin{equation*}
\int_0^r \left(at^{\delta_1} + 2cb^{1-\beta}t^\frac{n-k-2}{n-k-1}\right)^\alpha dt \leq r^{1-\alpha}\left(\int_0^r at^{\delta_1} + 2cb^{1-\beta}t^\frac{n-k-2}{n-k-1} dt\right)^\alpha
\end{equation*}
and so carrying out the integration yields
\begin{flalign*}
&&f(r)\leq \left(\tfrac{\alpha^{1-\alpha}}{n-m-1}\right)^\frac{n-m-1}{\alpha}\left(\frac{a}{1+\delta_1}r^\frac{n-m}{n-m-1} + \frac{2cb^{1-\beta}}{2-\frac{1}{n-k-1}} r^2 \right)^{n-m-1}, && (r\leq r_0).
\end{flalign*}
Simplifying the expression, noting that $1/(1+\delta_1)\leq 1$, $2-1/(n-k-1)\geq 1$, and $\alpha^{1-\alpha}/(n-m-1)\leq 1$ we finally obtain
\begin{flalign}\label{eq:volumeineq1}
&& f(r)\leq w(r)^{n-m-1}, && (r\leq r_0)
\end{flalign}
where
$$
w(r) = \left( \tfrac{\alpha}{n-m-1}\right)^\frac{1}{n-k-1}\vol(\hat\nu)^\frac{1}{n-m-1} r^\frac{n-m}{n-m-1} +2cb^{1-\beta} r^2.
$$
For $t\geq r_0$ we may assume $b\neq 0$ and since $bf\geq 1$ we have
\begin{flalign*}&&f'\leq \alpha^{-\alpha}\left(a b^{\delta_1+\delta_3} t^{\delta_1}+2c b t^\frac{n-k-2}{n-k-1}\right)^\alpha f^{1-\frac{\alpha}{p}}, &&(t\geq r_0).
\end{flalign*}
Proceeding as above except that we integrate from $r_0$ to $r>r_0$ it is easy to check that for $r\geq r_0$,
\begin{equation*}
f(r)\leq \frac{1}{b}\left[1+ b^\frac{\alpha}{n-m-1}w(r)^\alpha\right]^{p/\alpha}.
\end{equation*}
Moreover, for $r\geq r_0$ we have $b w(r)^{n-m-1}\geq b w(r_0)^{n-m-1}\geq bf(r_0)= 1$ and hence
$$b^{-1}(1+b^\frac{\alpha}{n-m-1}w(r)^\alpha)^{p/\alpha}\leq b^{-1}(2b^\frac{\alpha}{n-m-1}w(r)^\alpha)^{p/\alpha} = 2^{p/\alpha} b^{\beta p}w(r)^p$$
and thus for $r\geq r_0$ we have
\begin{flalign}\label{eq:volumeineq2}
&& f(r)\leq 2^{p/\alpha}b^{\beta p}w(r)^{p}, && (r\geq r_0).
\end{flalign}
Combining equations \eqref{eq:volumeineq1} and \eqref{eq:volumeineq2} it then follows that for all $r\geq 0$ the function $f$ satisfies
\begin{equation*}
f(r)\leq w(r)^{n-m-1} + 2^{p/\alpha}b^{\beta p} w(r)^p.
\end{equation*} 
Noting that $f(r)\geq \mu(r)V(r)$ it follows that the volume of the tube $T(\Sigma,r)$ satisfies
\begin{equation*}
V(r)\leq \left(w(r)^{n-m-1} + 2^{p/\alpha}b^{\beta p} w(r)^p\right)e^{\kappa r^{2\alpha}}
\end{equation*}
This completes the proof of Theorem \ref{thm:integralestimate} contingent on our proof of Lemma \ref{lem:2meanlemma}.

\begin{remark}\label{rem:pointwise}
As mentioned in the introduction, in the case of a pointwise lower curvature bound $\norm{(\rho_k)_\m}_p=0$ with $k=m$, the estimate reduces to
\begin{equation}
V(r)\leq
\frac{1}{n-m}\vol(\Sigma)\vol(\mathbb S^{n-m-1})r^{n-m}
\end{equation}
which is precisely the volume of a tube around a piece of an $m$-plane in $\mathbb R^n$.  The loss of sharpness in the pointwise case when $k\neq m$ comes from the use of H\"older's inequality above, and could be removed by setting $\norm{(\rho_k)_\m}_p=0$ earlier in the computation.
\end{remark}

\begin{proof}[Proof of Lemma \ref{lem:2meanlemma}]
Define 
\begin{align*}
\sigma &= \min\{\phi_\p/m,\;\psi_\p/(n-m-1)\} \\
\tau &= \max\{\phi_\p/m,\;\psi_\p/(n-m-1)\}
\end{align*}
and observe that both $\sigma$ and $\tau$ are absolutely continuous and from equation \eqref{eq:kriccati}, using the fact that $0\leq (\rho_{n-k-1})_\m\leq (\rho_k)_\m$ they satisfy
\begin{align*}
\sigma' + \sigma^2&\leq(\rho_k)_\m  \\
\tau' + \tau^2 &\leq (\rho_k)_\m .
\end{align*}
Multiplying the first equation by $(\sigma\tau)^{p-1}\mathcal A$ and integrating, we have
\begin{equation}\label{eq:2mean1}
\int_0^r \sigma'(\sigma\tau)^{p-1}\mathcal A dt + \int_0^r \sigma^{p+1}\tau^{p-1}\mathcal A \leq \int_0^r(\rho_k)_\m (\sigma\tau)^{p-1}\mathcal Adt.
\end{equation}
Integrating the first term by parts we find that
\begin{align*}
\frac{1}{p}&\int_0^r (\sigma^p)' \tau^{p-1}\mathcal A dt = \frac{1}{p}\sigma^p\tau^{p-1}\mathcal A\Big\vert_0^r - \frac{p-1}{p}\int_0^r\sigma^p\tau^{p-2}\tau'\mathcal Adt -\frac{1}{p}\int_0^r\sigma^p\tau^{p-1}h\mathcal Adt \\
&\quad\geq 0 - \frac{p-1}{p}\int_0^r\sigma^p\tau^{p-2}((\rho_k)_\m - \tau^2)\mathcal Adt - \frac{n-k-1}{p}\int_0^r\sigma^p\tau^{p-1}(\sigma+\tau)\mathcal Adt
\end{align*}
where we have used the fact that $\sigma\tau$ is bounded as $t\to 0$ and $\mathcal A(0)=0$ for $m<n-1$.  The last term uses the observation $h=\phi+\psi\leq (n-k-1)(\sigma + \tau)$.  Substituting back into \eqref{eq:2mean1}, we now have
\begin{align*}
\frac{p-(n-k)}{p}\int_0^r (\sigma\tau)^p\mathcal A dt &+\left(1-\frac{n-k-1}{p}\right)\int_0^r \sigma^{p+1}\tau^{p-1}\mathcal A dt \\ 
&\;\;\leq \frac{p-1}{p}\int_0^r(\rho_k)_\m \sigma^{p}\tau^{p-2}\mathcal A dt + \int_0^r (\rho_k)_\m (\sigma\tau)^{p-1}\mathcal Adt.
\end{align*}
Assuming $p>n-k$ the first term is positive, the second term is non-negative and can be dropped, and since $0\leq \sigma\leq \tau$ we can use $\sigma^p\tau^{p-2}\leq (\sigma\tau)^{p-1}$ to obtain
$$\int_0^r (\sigma\tau)^p\mathcal Adt\leq \frac{2p-1}{p-(n-k)}\int_0^r (\rho_k)_\m (\sigma\tau)^{p-1}\mathcal Adt.$$
Finally, using H\"older's inequality on the right hand side we have
$$\int_0^r(\rho_k)_\m (\sigma\tau)^{p-1}\mathcal Adt\leq \left(\int_0^r (\rho_k)_\m ^p\mathcal A dt\right)^{1/p}\left(\int_0^r(\sigma\tau)^{p}\mathcal A dt\right)^{1-\frac{1}{p}}$$
and the lemma follows immediately.
\end{proof}

We conclude with a proof of Corollary \ref{cor:cheegerlemma}.
\begin{proof}[Proof of Corollary \ref{cor:cheegerlemma}]
Fix $n,m,p,H,v_0,D$ as in the statement of the Corollary.  By Theorem \ref{thm:integralestimate}, there exists a function $F(a,b,r)$ with the property that $F\to 0$ as $a,b\to 0$ such that for any closed $m$-dimensional minimal submanifold $\Sigma$ of a complete $n$-dimensional Riemannian manifold $M$ the volume of the tube around $\Sigma$ satisfies $\vol(T(\Sigma,r))\leq F(\vol(\Sigma),\norm{(\rho_k - H)_\m}_p,r)$.  

Given a closed minimal submanifold $\Sigma^m$ of an $n$-dimensional closed Riemannian manifold satisfying $\vol(M)\geq v_0$ and $\textup{diam}(M)\leq D$, since $M\subset T(\Sigma,D)$ we have
$$v_0\leq \vol(M) = \vol(T(\Sigma,D)) \leq F(\vol(\Sigma),\norm{(\rho_k-H)_\m}_p,D).$$
Since $v_0$ is fixed, for sufficiently small $\epsilon$ there exists a number $\delta>0$ such that if $\norm{(\rho_k-H)_\m}_p\leq \epsilon$ then $\vol(\Sigma)\geq\delta$.
\end{proof}

\bibliographystyle{amsplain}
\bibliography{chahine2018}

\end{document}